\newcommand{\ve}{{\bf e}}
\begin{document}
	\large
	
	\title{A Genuine Extension of The Moore-Penrose Inverse to Dual Matrices}
	\author{Chunfeng Cui\footnote{LMIB of the Ministry of Education, School of Mathematical Sciences, Beihang University, Beijing 100191 China.
			({\tt chungfengcui@buaa.edu.cn}).}
		\and \
		Liqun Qi\footnote{Department of Applied Mathematics, The Hong Kong Polytechnic University, Hung Hom, Kowloon, Hong Kong; Department of Mathematics, School of Science, Hangzhou Dianzi University, Hangzhou 310018 China
			({\tt maqilq@polyu.edu.hk}).}
	}
	\date{\today}
	\maketitle

	\begin{abstract}
		The Moore-Penrose inverse is a genuine extension of the matrix inverse.  Given a complex matrix, there uniquely exists another complex matrix satisfying the four Moore-Penrose conditions, and if the original matrix is nonsingular, it is exactly the inverse of that matrix.  In the last one and a half decades, in the study of approximate synthesis in kinematic, two generalizations of the Moore-Penrose inverse appeared for dual real matrices,
		including Moore-Penrose dual generalized inverse and
		dual Moore-Penrose generalized inverse (DMPGI).
		DMPGI satisfies the four Moore-Penrose conditions, but does not exist for  uncountably many  dual real matrices.  
		In this paper,  based on the singular value decomposition of dual matrices,
		we  extend the  first  Moore-Penrose condition  to dual matrices and  introduce a genuine extension of the Moore-Penrose inverse to dual matrices, referred to as GMPI.
		Given a dual complex matrix, its GMPI is the unique dual complex matrix that satisfies the  first extended and the other three  Moore-Penrose conditions.  If the original matrix is a complex matrix, its GMPI  is exactly the Moore-Penrose inverse of that matrix. And if the original matrix is a dual real matrix and its DMPGI exists,  its GMPI  coincides with its  DMPGI.

		\medskip


		\textbf{Key words.} Moore-Penrose inverse, dual real matrix, dual complex matrix, singular value decomposition, Moore-Penrose conditions.
		
	\end{abstract}

	\renewcommand{\Re}{\mathds{R}}
	\newcommand{\rank}{\mathrm{rank}}
	\renewcommand{\span}{\mathrm{span}}
	\newcommand{\X}{\mathcal{X}}
	\newcommand{\A}{\mathcal{A}}
	\newcommand{\I}{\mathcal{I}}
	\newcommand{\B}{\mathcal{B}}
	\newcommand{\C}{\mathcal{C}}
	\newcommand{\OO}{\mathcal{O}}
	\newcommand{\e}{\mathbf{e}}
	\newcommand{\0}{\mathbf{0}}
	\newcommand{\dd}{\mathbf{d}}
	\newcommand{\ii}{\mathbf{i}}
	\newcommand{\jj}{\mathbf{j}}
	\newcommand{\kk}{\mathbf{k}}
	\newcommand{\va}{\mathbf{a}}
	\newcommand{\vb}{\mathbf{b}}
	\newcommand{\vc}{\mathbf{c}}
	\newcommand{\vq}{\mathbf{q}}
	\newcommand{\vg}{\mathbf{g}}
	\newcommand{\pr}{\vec{r}}
	\newcommand{\ps}{\vec{s}}
	\newcommand{\pt}{\vec{t}}
	\newcommand{\pu}{\vec{u}}
	\newcommand{\pv}{\vec{v}}
	\newcommand{\pw}{\vec{w}}
	\newcommand{\pp}{\vec{p}}
	\newcommand{\pq}{\vec{q}}
	\newcommand{\pl}{\vec{l}}
	\newcommand{\vt}{\rm{vec}}
	\newcommand{\vx}{\mathbf{x}}
	\newcommand{\vy}{\mathbf{y}}
	\newcommand{\vu}{\mathbf{u}}
	\newcommand{\vv}{\mathbf{v}}
	\newcommand{\y}{\mathbf{y}}
	\newcommand{\vz}{\mathbf{z}}
	\newcommand{\T}{\top}
	
	\newtheorem{Thm}{Theorem}[section]
	\newtheorem{Def}{Definition}[section]
	\newtheorem{Ass}[Thm]{Assumption}
	\newtheorem{Lem}[Thm]{Lemma}
	\newtheorem{Prop}[Thm]{Proposition}
	\newtheorem{Cor}[Thm]{Corollary}
	\newtheorem{example}{Example}[section]
	\newtheorem{remark}[Thm]{Remark}
	
	\section{Introduction}
	
	The Moore-Penrose inverse is a genuine extension of the matrix inverse  \cite{BG03, WWQ18}.  It was independently discovered by
	E.H. Moore \cite{Mo20} in 1920, Arne Bjerhammar \cite{Bj51} in 1951, and Roger Penrose \cite{Pe55} in 1955.
	In \cite{Pe55}, young Penrose wrote:  This paper describes a generalization of the inverse of a non-singular matrix, as the unique solution of a certain set of equations. This generalized inverse exists for any
	(possibly rectangular) matrix whatsoever with complex elements.  Then he gave the four conditions as follows (Theorem 1 of \cite{Pe55}).    The four equations
	\begin{equation} \label{pen1}
		AXA = A,
	\end{equation}
	\begin{equation} \label{pen2}
		XAX = X,
	\end{equation}
	\begin{equation} \label{pen3}
		(AX)^* = AX,
	\end{equation}
	\begin{equation} \label{pen4}
		(XA)^* = XA,
	\end{equation}
	have a unique solution for any (complex matrix) $A$.   Such a solution is denoted as $A^+$, and called the Moore-Penrose inverse now.  The four conditions (\ref{pen1}-\ref{pen4}) are called the Moore Penrose conditions.   Roger Penrose obtained Nobel prize in physics in 2020.   This increases the mysteriousity of these four conditions.
	
	On the other hand, dual algebra found quite applications in kinematic analysis \cite{An98, PS07, PV09}.
	{An $m \times n$} dual real matrix $A$ can be written as $A = A_s + A_d\epsilon$, where $A_s$ and $A_d$ are $m \times n$ real matrices, $\epsilon$ is the infinitesimal unit, which satisfies that $\epsilon \not = 0$ and $\epsilon^2 = 0$.   In 2007, Pennestr\'{i} and Stefanelli \cite{PS07} introduced the Moore-Penrose inverse (MPDGI) $A^P$ for
	a dual real matrix $A$.  The MPDGI $A^P$ is then defined as
	\begin{equation} \label{MPDGI}
		A^P = A_s^+ - A_s^+A_dA_s^+\epsilon.
	\end{equation}
	Pennestr\'{i} and Valentini \cite{PV09} in 2009, Angeles \cite{An12} in 2012 further discussed the properties and applications of MPDGI.  MPDGI exists for any dual real matrix $A$, but its definition somewhat deviates from the Moore-Penrose conditions (\ref{pen1}-\ref{pen4}).  In 2018, de Falco, Pennestr\'{i} and Udwadia \cite{FPU18} studied generalized inverses of dual real matrices, which satisfy the four Moore-Penrose conditions (\ref{pen1}-\ref{pen4}).   In 2020, Udwadia, Pennestr\'{i} and de Falco \cite{UPF20} called a solution $X$ of (\ref{pen1}-\ref{pen4}) for a dual real matrix $A$, the dual Moore-Penrose generalized inverse (DMPGI) of $A$, and pointed out that there are uncountably many dual real matrices that do not have DMPGIs.  In 2021, Udwadia \cite{Ud21} analyzed conditions for a dual real matrix has generalized inverses which satisfy some of (\ref{pen1}-\ref{pen4}).  Then, Udwadia \cite{Ud21a} studied dual generalized inverses and their use in solving systems of linear dual equations.  Also in 2021, Wang \cite{Wa21} gave necessary and sufficient conditions for a dual real matrix $A$ having DMPGI, and conditions that the DMPGI and MPDGI of $A$ are equal.   The generalization of the Moore-Penrose inverse to dual real matrices attracted quite attention but seems there still needs a genuine extension of the Moore-Penrose inverse from complex matrices to dual matrices.   We now denote the DMPGI of a dual real matrix $A$ as $A^D$, not using the notation $A^+$ to avoid prejudice but unmatured conclusions.
	
	We may think about the issue of generalizing the Moore-Penrose inverse to dual matrices from a different angle.   Instead of requiring all dual matrices to satisfy the four conditions (\ref{pen1}-\ref{pen4}), we may think that the four conditions (\ref{pen1}-\ref{pen4}) maybe need to be modified for dual matrices.
	
	As pointed out by Udwadia, Pennestr\'{i} and de Falco \cite{UPF20}, the Moore-Penrose condition which yields difficulty to dual matrices is (\ref{pen1}).  Let $m=n=1$.  Then $A$ and $X$ are dual numbers.  If $A = a_d\epsilon$ is a nonzero infinitesimal dual number, where $a_d$ is a real or complex number, then we always have $AXA = 0$, i.e., (\ref{pen1}) cannot hold.  Thus, we may think to generalize (\ref{pen1}) to dual numbers.   Then, how to do this?
	
	As recently pointed out by Wang, Cui and Wei in \cite{WCW23}, dual numbers and dual generalized inverses are gradually discussed and studied by scholars, but the basic concepts of dual number algebra still lack.  It thus needs to explore basic knowledge of dual numbers and dual quaternions, which provides a solid foundation for subsequent research on dual algebra.
	
	The Moore-generalized inverse is closely related to the singular value decomposition {(SVD)} of matrices.  Recently, Qi and Luo \cite{QL23} presented the {SVD} of dual quaternion matrices, and {SVD}s of dual real matrices and dual complex matrices can be derived from there.   It was revealed that the singular values of dual matrices are dual real numbers.  The problem is that some of such
	singular values may be infinitesimal dual numbers, which have no reciprocals.   It is well known that the nonzero singular values of the Moore-Penrose inverse of a complex matrix $A$ are the reciprocals of nonzero singular values of $A$.  This cannot hold for dual numbers.   Hence, we consider an essential approximation $A_e$ of $A$, such that the nonzero singular values of $A_e$ are the appreciable singular values of $A$, and replace (\ref{pen1}) by
	\begin{equation} \label{pene}
		AXA = A_e.
	\end{equation}
	When $A$ are complex matrices, (\ref{pene}) reduces to (\ref{pen1}).   With this extension, we found that there is a unique dual matrix $X$ satisfying (\ref{pen2}-\ref{pen4}) and (\ref{pene}).  We call this solution as the genuine Moore-Penrose inverse (GMPI) of $A$, and denoted it as $A^G$.  We found that $A^G = A^D$ as long as $A^D$ exists, {and gave a new sufficient and necessary condition for the existence of $A^D$. If $A$ is a complex matrix, then $A^G$ is exactly the Moore-Penrose inverse of $A$}.   Also, our discussion is valid for all dual complex matrices and even dual quaternion matrices.
	
	In the next section, we review the knowledge of dual numbers and dual matrices, and the properties of DMPGI and MPDGI.    In Section 3, we study singular values of dual matrices.  Then, in Section 4, we study GMPI and its properties.  Finally, in Section 5, we discuss the relations and properties of GMPI,  DMPGI, and MPDGI.

	\section{Dual Numbers, Dual Number Vectors and Dual Number Matrices}
	
	The field of real numbers, the field of complex numbers, the set of quaternions, the set of dual real numbers and the set of dual complex numbers, the set of dual quaternions are denoted  by $\mathbb R$, $\mathbb C$, $\mathbb Q$, $\mathbb D$, $\mathbb {DC}$ and $\mathbb {DQ}$, respectively.    In this section, we cover quaternions and dual quaternions.   Readers who are not interested in them can neglect them.  This will not affect the reading.
	
	In the literature, there are at least two different definitions of dual complex numbers.  One of them defines the dual complex number multiplication as noncommutative.   Here, we adopt the concept that dual complex numbers are special dual quaternions, dual real numbers are special dual complex numbers, and the dual complex number multiplication is commutative.   For the difference between these two definitions, one may see \cite{QC23} for more discussions.

	A {\bf dual number} $a = a_s + a_d\epsilon$ has standard part $a_s$ and dual part $a_d$.   If both $a_s$ and $a_d$ are real numbers, then $a$ is a {\bf dual real number}.   If both $a_s$ and $a_d$ are complex numbers, then $a$ is a {\bf dual complex number}.   If both $a_s$ and $a_d$ are quaternions, then $a$ is a {\bf dual quaternion}. The symbol $\epsilon$ is the infinitesimal unit, satisfying $\epsilon^2 = 0$, and $\epsilon$ is commutative with real numbers.  If $a_s \not = 0$, then we say that $a$ is {\bf appreciable}.   Otherwise, we say that $a$ is {\bf infinitesimal}.
	
	For a dual complex number or a dual quaternion $a = a_s + a_d\epsilon$, its conjugate is defined as $a^* = a_s^* + a_d^*\epsilon$, i.e., both $a_s$ and $a_d$ take their conjugates.
	
	Suppose we have two dual numbers $a = a_s + a_d\epsilon$ and $b = b_s + b_d\epsilon$.   Then their sum is $a+b = (a_s+b_s) + (a_d+b_d)\epsilon$, and their product is $ab = a_sb_s + (a_sb_d+a_db_s)\epsilon$.
	The multiplication of dual real or complex numbers is commutative.   But the multiplication of dual quaternions is not commutative, this is due to the noncommutative property of quaternion multiplication.
	
	Suppose we have two dual real numbers $a = a_s + a_d\epsilon$ and $b = b_s + b_d\epsilon$.   By \cite{QLY22}, if
	$a_s > b_s$, or $a_s = b_s$ and $a_d > b_d$, then we say $a > b$.   Then this defines positive, nonnegative dual numbers, etc.
	In particular, for a dual number $a = a_s + a_d\epsilon$, its magnitude is defined as a nonnegative dual number
	$$|a| = \left\{ \begin{array}{ll} |a_s| + {\rm sgn}(a_s)a_d\epsilon, & \ {\rm if}\  a_s \not = 0, \\ |a_d|\epsilon, &   \ {\rm otherwise}.  \end{array}  \right.$$
	For any  dual   number $a=a_s+a_d\epsilon$ and dual number  $b=b_s+b_d\epsilon$ with $a_s\neq 0$, or $a_s=0$ and $b_s=0$, there is
	\begin{equation*}
		\frac{a_s+a_d\epsilon}{b_s+b_d\epsilon} =
		\left\{
		\begin{array}{ll}
			\frac{a_s}{b_s}+\left(  \frac{a_d}{b_s}- \frac{a_s}{b_s} \frac{b_d}{b_s}\right)\epsilon,   & \text{ if } b_s\neq 0, \\
			\frac{a_d}{b_d} +c\epsilon,  & \ \text{if } a_s= 0, b_s= 0,\\
		\end{array}
		\right.
	\end{equation*}
	where $c$ is an arbitrary   number.
	
	We   use $0$, ${\bf 0}$, and $O$ to denote a zero number, a zero vector, and a zero matrix, respectively.
	
	
	A dual number vector is denoted by $\vx = (x_1, \dots, x_n)^\top$.  If $x_1, \dots, x_n \in {\mathbb D}$, then we say that $\vx$ is a dual real vector.   If $x_1, \dots, x_n \in {\mathbb {DC}}$, then we say that $\vx$ is a dual complex vector.  If $x_1, \dots, x_n \in {\mathbb {DQ}}$, then we say that $\vx$ is a dual quaternion vector.  Its $2$-norm is defined as
	\begin{equation}\label{dual_norm}
		\|\vx\|_2 = \left\{\begin{array}{ll}
			\|\vx_s\|_2+{\frac{\vx_s^*\vx_d+\vx_d^*\vx_s}{2\|\vx_s\|_2}\epsilon,}
			& \ \mathrm{if} \  \vx_s \not = \0,\\
			\|\vx_d\|_2\epsilon, & \ \mathrm{if} \  \vx_s  = \0.
		\end{array}\right.
	\end{equation}
	Here $\vx^*$ is the transpose of $\vx$ if $\vx$ is a dual real vector, and the conjugate transpose of $\vx$ if $\vx$ is a dual complex vector or a dual quaternion vector.     We  may denote $\vx = \vx_s + \vx_d\epsilon$, where $\vx_s, \vx_d \in {\mathbb R}^n$, or ${\mathbb C}^n$, or ${\mathbb Q}^n$, respectively.
	If $\vx_s \not = \0$, then we say that $\vx$ is appreciable.
	The unit vectors in ${\mathbb R}^n$ are denoted as $\ve_1, \cdots, \ve_n$.   They are also unit vectors of ${\mathbb {C}}^n$, ${\mathbb {Q}}^n$ ${\mathbb {D}}^n$, ${\mathbb {DC}}^n$, and ${\mathbb {DQ}}^n$.	
	
	We say a dual number vector $\vx= (x_1, \cdots, x_n)^\top$ is a unit vector if $\|\vx\|_2=1$, or equivalently, $\|\vx_s\|_2=1$ and $\vx_s^*\vx_d=0$.
	
	Let $\vx, \vy \in {\mathbb D}^n$, or ${\mathbb {DC}}^n$, or ${\mathbb {DQ}}^n$.  If $\vx^* \vy = 0$, then we say that $\vx$ and $\vy$ are orthogonal.  Let $\vx^{(1)} = \vx_s^{(1)} + \vx_d^{(1)}\epsilon, \dots, \vx^{(k)} = \vx_s^{(k)} + \vx_d^{(k)}\epsilon \in {\mathbb D}^n$.  If $\vx_s^{(1)}, \dots, \vx_s^{(k)}$ are linearly independent, then we say that $\vx^{(1)}, \dots, \vx^{(k)}$ are appreciably linearly independent.

	A dual real matrix (or dual complex matrix, or dual quaternion matrix, respectively) is denoted by $A=A_s+A_d\epsilon\in {\mathbb {D}}^{m\times n}$ (or ${\mathbb {DC}}^{m\times n}$, or ${\mathbb {DQ}}^{m\times n}$, respectively).   Suppose that $m = n$.
	If $A_s$ is {invertible},
	then $A$ is also {invertible} and $A^{-1}=A_s^{-1}-A_s^{-1}A_dA_s^{-1}\epsilon$ {\cite{PS07, QC23}.}
	{The $F$-norm of $A\in\mathbb{D}^{m\times n}$ (or ${\mathbb {DC}}^{m\times n}$, or ${\mathbb {DQ}}^{m\times n}$, respectively) is
		\begin{equation}\label{dual_Fnorm}
			\|A\|_F = \left\{\begin{array}{ll}
				\|A_s\|_F+ \frac{tr(A_s^*A_d+A_d^*A_s)}{2\|A_s\|_F}\epsilon,
				& \ \mathrm{if} \  A_s \not = \0,\\
				\|A_d\|_F\epsilon, & \ \mathrm{if} \  A_s  = \0.
			\end{array}\right.
	\end{equation}}
	Let $A=A_s+A_d\epsilon\in {\mathbb {D}}^{m\times n}$.   Its transpose is defined as $A^\top = A_s^\top +A_d^\top\epsilon$.  Suppose that $m = n$. If $A = A^\top$, then $A$ is called a symmetric dual real matrix.  If $A^{-1} = A^\top$, then $A$ is called an orthogonal dual real matrix.  {
		We may show that
		\begin{equation*}
			\|UA\|_F=\|A\|_F
		\end{equation*}
		for any orthogonal dual {matrix} $U$ {and any dual real matrix $A$ with adequate dimension}.
		In fact, if $A_s=O$, there is $\|UA\|_F=\|U_sA_d\|_F\epsilon=\|A_d\|_F\epsilon=\|A\|_F$. If $A_s\neq O$, there is $\|UA\|_F^2=\|A\|_F^2$.
	}

	
	Similarly, let $A=A_s+A_d\epsilon\in {\mathbb {DC}}^{m\times n}$, or ${\mathbb {DQ}}^{m\times n}$.   Its conjugate transpose is defined as $A^* = A_s^* +A_d^*\epsilon$.  Suppose that $m = n$. If $A = A^*$, then $A$ is called a Hermitan  dual complex (or quaternion) matrix.  If $A^{-1} = A^*$, then $A$ is called a unitary dual complex ({or quaternion}) matrix.  {Again, for any unitary dual complex (quaternion, respectively) matrix $U$ and any dual complex (quaternion, respectively) matrix $A$ with adequate dimension, we have $\|UA\|_F=\|A\|_F$.}
	

	\subsection{Dual Generalized Inverses}
	
	For dual real matrix $A = A_s + A_d\epsilon \in {\mathbb D}^{m \times n}$, a dual real matrix $X$ is called an $\{i, \dots, j \}$- dual generalized inverse of $A$ if $X$ satisfies equations (i), \dots, (j) among (\ref{pen1})-(\ref{pen4}).  In \cite{PS07} (also see \cite{PV09}), Pennestr\`i and Stefanelli introduced the Moore-Penrose dual generalized inverse (MPDGI, as called in \cite{Wa21}) of $A$ as
	\eqref{MPDGI}.
	It was shown in \cite{PV09} that when $m > n$ {and $A_s$ is of full column   rank}, $A^PA = I_n$.  {Furthermore,  $A^P$  is a \{1,2,4\}- dual generalized inverse of $A$ \cite{FPU18}.}  This  can be verified  directly by (\ref{MPDGI}) and $A_s^+A_s = I_n$.
	Similarly, if $m < n$  and $A_s$ is of full row rank, $AA^P = I_m$ and $A^P$  is a \{1,2,3\}- dual generalized inverse of $A$ \cite{FPU18}.
	If $m=n$ and $A_s$ is nonsingular, then $A^P$ is also the DMPGI of $A$.

	De Falco et al. \cite{FPU18} showed that  $A^P$ in \eqref{MPDGI} is a \{2\}-dual generalized inverse of $A$, and gave  necessary and sufficient conditions for $A^P$ in \eqref{MPDGI} to be a \{1\}-, \{3\}-, \{4\}- dual generalized inverse of $A$, respectively.  Specifically,  $A^P$ in \eqref{MPDGI} is a \{1\}- dual generalized inverse of $A$ if and only if
	\begin{equation}\label{DMPGI:existcondition}
		(I_m-A_sA_s^+)A_d(I_n-A_s^+A_s)=O.
	\end{equation}
	
	Wang \cite{Wa21} presented   necessary and sufficient conditions for the dual matrix $A$ to have   DMPGI  and   necessary and sufficient conditions for the MPDGI to be a DMPGI of $A$.

	\begin{Thm}[Theorem 2.2, \cite{Wa21}]\label{thm:A+exists} Let $A=A_s+A_d\epsilon$. Then the following conditions are equivalent.
		\begin{itemize}
			\item[(a)]  The DMPGI  $A^+$ exists;
			\item[(b)] Equation \eqref{DMPGI:existcondition} holds;
			\item[(c)] $\mathrm{rank}\left[
			\begin{array}{cc}
				A_d & A_s \\
				A_s  & O
			\end{array}
			\right]=2\mathrm{rank}(A_s)$.
		\end{itemize}
		Furthermore, when $A^D$ exists, it holds that
		\begin{equation}\label{equ:DMPGI}
			A^D=A_s^+- A_s^+A_dA_s^+\epsilon +(A_s^\top A_s)^+A_d^\top(I_m-A_sA_s^+)\epsilon +(I_n-A_s^+A_s)A_d^\top(A_sA_s^\top)^+  \epsilon.
		\end{equation}
	\end{Thm}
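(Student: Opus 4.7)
The plan is to write $A = A_s + A_d\epsilon$ and a candidate inverse $X = X_s + X_d\epsilon$, substitute into the four Moore-Penrose equations \eqref{pen1}--\eqref{pen4}, and separate the coefficients of $1$ and $\epsilon$. The four standard-part equations are exactly the Moore-Penrose conditions for $X_s$ relative to $A_s$, so the classical uniqueness of $A_s^+$ forces $X_s = A_s^+$. The existence of $A^D$ thus reduces to whether the four dual-part equations admit a common solution $X_d$ once $X_s$ is fixed.

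To obtain (a) $\Leftrightarrow$ (b), I would first observe that with $X_s = A_s^+$ the dual part of \eqref{pen1} becomes the linear matrix equation $A_s X_d A_s = A_d - A_s A_s^+ A_d - A_d A_s^+ A_s$. By the classical solvability criterion for $A_s Y A_s = B$, a solution $X_d$ exists if and only if $A_s A_s^+ B A_s^+ A_s = B$; substituting $B$ and using $A_s A_s^+ A_s = A_s$ and $A_s^+ A_s A_s^+ = A_s^+$ collapses this identity to $(I_m - A_s A_s^+) A_d (I_n - A_s^+ A_s) = O$, which is (b). For the reverse direction I would exhibit $X_d$ via the candidate formula \eqref{equ:DMPGI} and verify all four dual-part equations by direct computation. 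The key auxiliary identities are $A_s (A_s^\top A_s)^+ A_s^\top = A_s A_s^+$ and $A_s^\top (A_s A_s^\top)^+ A_s = A_s^+ A_s$, which show that the two extra terms beyond $-A_s^+ A_d A_s^+$ contribute only Hermitian projector-like corrections, restoring \eqref{pen3} and \eqref{pen4} without disturbing the dual part of \eqref{pen1} (which is precisely where assumption (b) is consumed).

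For (b) $\Leftrightarrow$ (c), I would perform rank-preserving block row and column operations on
\[
M = \begin{bmatrix} A_d & A_s \\ A_s & O \end{bmatrix}.
\]
Write $P = I_m - A_s A_s^+$ and $Q = I_n - A_s^+ A_s$. Right-multiplying $M$ by $\begin{bmatrix} I_n & 0 \\ -A_s^+ A_d & I_n \end{bmatrix}$ and then left-multiplying by $\begin{bmatrix} I_m & -P A_d A_s^+ \\ 0 & I_m \end{bmatrix}$ reduces $M$ to $\begin{bmatrix} P A_d Q & A_s \\ A_s & O \end{bmatrix}$. After a block row/column swap this is block triangular with diagonal blocks $A_s,A_s$ and off-diagonal block $P A_d Q$; since $\mathrm{col}(P A_d Q) \perp \mathrm{col}(A_s)$ by construction, the image splits as a direct sum, giving $\mathrm{rank}(M) = 2\,\mathrm{rank}(A_s) + \mathrm{rank}(P A_d Q)$. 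Hence (c) holds if and only if $P A_d Q = O$, which is (b).

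The main obstacle is the verification in the second paragraph: it is not a priori obvious that the specific $X_d$ in \eqref{equ:DMPGI} satisfies \emph{all four} dual equations rather than merely the first. The two extra terms beyond $-A_s^+ A_d A_s^+$ are tailored precisely to force Hermiticity of $A_s X_d + A_d A_s^+$ and of $A_s^+ A_d + X_d A_s$; but showing that they do not break the dual part of \eqref{pen1} requires a careful use of (b) together with the identities above, and this coordinated check is the real work of the proof.
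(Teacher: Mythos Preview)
The paper does not prove this theorem; it is quoted verbatim as Theorem~2.2 of Wang~\cite{Wa21} and used as background, with no argument supplied. There is therefore nothing in the present paper to compare your proposal against.

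For what it is worth, your plan is a correct and self-contained route to the result. Splitting the four Moore--Penrose equations into standard and dual parts forces $X_s=A_s^+$ by classical uniqueness, and the dual part of \eqref{pen1} then becomes $A_sX_dA_s = A_d - A_sA_s^+A_d - A_dA_s^+A_s$, whose Penrose consistency condition $A_sA_s^+BA_s^+A_s=B$ collapses, after the projector algebra you indicate, to exactly \eqref{DMPGI:existcondition}. Your block-elimination argument for (b)$\Leftrightarrow$(c) is also sound: the reduction to $\begin{bmatrix} PA_dQ & A_s \\ A_s & O \end{bmatrix}$ is correct, and the rank identity $\mathrm{rank}(M)=2\,\mathrm{rank}(A_s)+\mathrm{rank}(PA_dQ)$ follows because $\mathrm{col}(PA_dQ)\subseteq\mathrm{col}(A_s)^\perp$ and $\mathrm{row}(PA_dQ)\subseteq\mathrm{row}(A_s)^\perp$. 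The only place to be careful in a write-up is the one you already flag: when verifying that the explicit $X_d$ in \eqref{equ:DMPGI} satisfies all four dual-part equations, the two correction terms are annihilated by $A_s$ on the appropriate side (via $A_s(I_n-A_s^+A_s)=O$ and $(I_m-A_sA_s^+)A_s=O$), so they do not disturb the dual part of \eqref{pen1} or \eqref{pen2}, while their contributions to the dual parts of \eqref{pen3} and \eqref{pen4} are precisely what symmetrizes $A_sA_s^+A_dA_s^+ + A_dA_s^+$ and its counterpart.
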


	\begin{Thm}[Theorem 2.3, \cite{Wa21}]\label{thm:AP=A+}  Let $A=A_s+A_d\epsilon$. Then the following conditions are equivalent.
		\begin{itemize}
			\item[(a)]  The DMPGI  $A^D$ exists  and $A^D=A^P$;
			\item[(b)]  $(I_m-A_sA_s^+)A_d=O$ and $A_d(I_n-A_s^+A_s)=O$;
			\item[(c)] $\mathrm{rank}\left[
			\begin{array}{cc}
				A_s & A_d
			\end{array}
			\right] = \mathrm{rank}\left[
			\begin{array}{cc}
				A_s^\top & A_d^\top
			\end{array}
			\right]=\mathrm{rank}(A_s)$;
			
			\item[(d)] $\mathcal{R}(A_d)\subset \mathcal{R}(A_s)$ and $\mathcal{R}(A_d^\top)\subset \mathcal{R}(A_s^\top)$.
		\end{itemize}
	\end{Thm}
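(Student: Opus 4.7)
My plan is to treat $(a)\Leftrightarrow(b)$ as the substantive step and $(b)\Leftrightarrow(c)\Leftrightarrow(d)$ as linear-algebraic bookkeeping on column ranges. As a preparatory calculation I would first expand, using $A_s^+A_sA_s^+ = A_s^+$ and $A_sA_s^+A_s = A_s$,
\begin{align*}
AA^P &= A_sA_s^+ + (I_m - A_sA_s^+)A_dA_s^+\epsilon,\\
A^PA &= A_s^+A_s + A_s^+A_d(I_n - A_s^+A_s)\epsilon,\\
AA^PA &= A_s + \bigl[A_sA_s^+A_d + (I_m - A_sA_s^+)A_dA_s^+A_s\bigr]\epsilon,
\end{align*}
and recall from \cite{FPU18} that $A^P$ is always a $\{2\}$-inverse, so condition (\ref{pen2}) is free and only (\ref{pen1}), (\ref{pen3}), (\ref{pen4}) need to be checked for $X=A^P$.

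For the easy direction $(b)\Rightarrow(a)$: substituting $(I_m - A_sA_s^+)A_d = O$ and $A_d(I_n - A_s^+A_s) = O$ makes the dual parts of $AA^P$ and $A^PA$ vanish, so (\ref{pen3}) and (\ref{pen4}) reduce to the standard Hermitian identities $(A_sA_s^+)^* = A_sA_s^+$ and $(A_s^+A_s)^* = A_s^+A_s$, while the bracket in $AA^PA$ collapses to $A_sA_s^+A_d = A_d$, verifying (\ref{pen1}). Combined with the free (\ref{pen2}), $A^P$ satisfies all four Penrose conditions, so by uniqueness of the DMPGI, $A^D = A^P$.

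The delicate direction is $(a)\Rightarrow(b)$. From the dual part of (\ref{pen3}) I would read off the matrix identity
\[
(A_s^+)^*A_d^*(I_m - A_sA_s^+) = (I_m - A_sA_s^+)A_dA_s^+;
\]
right-multiplying by $(I_m - A_sA_s^+)$ kills the right-hand side via $A_s^+(I_m - A_sA_s^+) = O$, whence $(A_s^+)^*A_d^*(I_m - A_sA_s^+) = O$, and conjugate-transposing yields $(I_m - A_sA_s^+)A_dA_s^+ = O$. A mirror manipulation on (\ref{pen4}) gives $A_s^+A_d(I_n - A_s^+A_s) = O$, and the dual part of (\ref{pen1}) reduces to $(I_m - A_sA_s^+)A_d(I_n - A_s^+A_s) = O$. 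Assembling these three via the resolution of identity $I_n = A_s^+A_s + (I_n - A_s^+A_s)$,
\[
(I_m - A_sA_s^+)A_d = (I_m - A_sA_s^+)A_dA_s^+A_s + (I_m - A_sA_s^+)A_d(I_n - A_s^+A_s) = O + O = O,
\]
and a symmetric computation giving $A_d(I_n - A_s^+A_s) = O$, recovers (b).

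For the remaining equivalences, $(I_m - A_sA_s^+)$ is the orthogonal projector onto $\mathcal{R}(A_s)^\perp$, so $(I_m - A_sA_s^+)A_d = O$ is equivalent to every column of $A_d$ lying in $\mathcal{R}(A_s)$, i.e.\ to $\mathcal{R}(A_d)\subset\mathcal{R}(A_s)$, and via the usual rank-additivity test this is equivalent to $\mathrm{rank}[A_s,A_d] = \mathrm{rank}(A_s)$; the transpose identity in (b) is the symmetric statement for $A^\top$. I expect the real obstacle to be the $(a)\Rightarrow(b)$ step, where the four Penrose conditions must be peeled off in the right order—(\ref{pen3}) and (\ref{pen4}) first, to kill the two ``mixed'' products $(I_m-A_sA_s^+)A_dA_s^+$ and $A_s^+A_d(I_n-A_s^+A_s)$, then (\ref{pen1}) to handle the ``doubly orthogonal'' remainder—so that the final orthogonal decomposition lands cleanly on the two annihilation identities of (b).
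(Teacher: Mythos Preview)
Your argument is correct and complete. However, note that the paper itself does not supply a proof of this theorem: it is quoted verbatim as Theorem~2.3 of Wang~\cite{Wa21} and used as a black-box input to the later analysis in Section~5. So there is no ``paper's own proof'' to compare against here.

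On the substance of your write-up: the preparatory expansions of $AA^P$, $A^PA$, and $AA^PA$ are all accurate, and the $(b)\Rightarrow(a)$ direction goes through exactly as you describe. Your $(a)\Rightarrow(b)$ argument is also sound; the key trick of right-multiplying the dual part of the Hermitian condition $(AX)^*=AX$ by the projector $(I_m-A_sA_s^+)$ to kill one side, then transposing to recover $(I_m-A_sA_s^+)A_dA_s^+ = O$, is the clean way to do it, and the final reassembly via $I_n = A_s^+A_s + (I_n-A_s^+A_s)$ is correct. The equivalences $(b)\Leftrightarrow(c)\Leftrightarrow(d)$ are indeed routine range/projector bookkeeping. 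One small cosmetic point: in the theorem as stated the matrices are dual \emph{real}, so $*$ is just transpose and you may want to write $A_d^\top$ rather than $A_d^*$ for consistency with the statement, though of course the argument works identically over $\mathbb{C}$.
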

	
	\bigskip

	\section{Singular Value Decomposition of Dual Matrices}

	The following singular value decomposition theorem of dual quaternion matrices was given in \cite{QL23}.
	
	\begin{Thm}\label{DQSVD}
		Suppose that $A \in {\mathbb {DQ}}^{m \times n}$. Then there exists a unitary  dual quaternion matrix $V \in {\mathbb {DQ}}^{m \times m}$ and a unitary dual quaternion matrix $U \in {\mathbb {DQ}}^{n \times n}$ such that
		\begin{equation}
			\Sigma = V^* AU = \begin{bmatrix} \Sigma_t & O  \\ O & O \end{bmatrix},
		\end{equation}
		where $\Sigma_t\in {\mathbb{D}}^{t\times t}$ is a diagonal dual real matrix, with the form $$\Sigma_t ={\rm diag}\left(\mu_1, \cdots, \mu_r, \cdots, \mu_t \right),$$
		$r \le t \le \min \{ m , n \}$, $\mu_1 \ge \mu_2 \ge \cdots \ge \mu_r$ are positive appreciable dual numbers, and $\mu_{r+1} \ge \cdots \ge \mu_t$ are positive infinitesimal  dual numbers.    Counting possible multiplicities of the diagonal entries, the form $\Sigma_t$ is unique.
		
	\end{Thm}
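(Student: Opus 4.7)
The plan is to reduce to the classical quaternion SVD in two stages: first put the standard part $A_s$ in diagonal form via a constant unitary transformation, then clean up the dual part using an infinitesimal unitary perturbation.

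For the first stage, apply the classical quaternion SVD to $A_s$ to get unitary quaternion matrices $\bar V_s\in \mathbb{Q}^{m\times m}$ and $\bar U_s\in \mathbb{Q}^{n\times n}$ with $\bar V_s^* A_s \bar U_s = D$, where $D$ is real diagonal with positive entries $\sigma_1\ge\cdots\ge\sigma_r>0$ followed by zeros, $r=\mathrm{rank}(A_s)$. Viewing $\bar V_s,\bar U_s$ as unitary dual quaternion matrices gives $\bar V_s^* A\bar U_s = D + C\epsilon$ with $C=\bar V_s^* A_d\bar U_s$; partition $C$ conformally with $D$ into blocks $C_{ij}$, where $C_{11}\in \mathbb{Q}^{r\times r}$ and $C_{22}\in \mathbb{Q}^{(m-r)\times(n-r)}$. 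A second classical quaternion SVD on $C_{22}$ gives unitary $P, Q$ with $P^* C_{22} Q = \mathrm{diag}(\tau_{r+1},\ldots,\tau_t,0,\ldots,0)$ and $\tau_{r+1}\ge\cdots\ge\tau_t>0$; multiplying by the block matrices $I_r\oplus P^*$ on the left and $I_r\oplus Q$ on the right preserves $D$ and produces the infinitesimal singular values $\mu_j=\tau_j\epsilon$ for $r<j\le t$.

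The second stage removes the remaining off-diagonal entries of the dual part. I would use infinitesimal unitary dual matrices of the form $I_m-X\epsilon$ and $I_n+Y\epsilon$ with $X=-X^*$ and $Y=-Y^*$; these are exactly unitary because $\epsilon^2=0$. Such a transformation changes the dual-part correction by $C'\mapsto C'+DY-XD$. For entries $(i,j)$ with $i\le r<j$ or $j\le r<i$, exactly one of $\sigma_i,\sigma_j$ is nonzero, so $Y_{ij}$ or $X_{ij}$ is uniquely determined. For $i,j\le r$ with $i\ne j$, pairing the equations at positions $(i,j)$ and $(j,i)$ and using the skew-Hermitian constraints yields a $2\times 2$ linear system whose determinant is $\sigma_i^2-\sigma_j^2$. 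On the diagonal, $X_{ii}$ and $Y_{ii}$ are pure imaginary quaternions, which is exactly the freedom needed to cancel the pure imaginary quaternion part of $C'_{ii}$, leaving a real number as the dual part of $\mu_i$.

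The main obstacle is the case of repeated appreciable singular values, where the above $2\times 2$ system is singular. I would handle this by exploiting the gauge freedom already present in the first stage: within each index block $I$ where $\sigma_i$ is constant, the singular vectors of $A_s$ are only determined up to a common unitary $W$, which replaces $C_{11,II}$ by $W^* C_{11,II} W$. Choosing $W$ to unitarily diagonalize the Hermitian part $\tfrac12(C_{11,II}+C_{11,II}^*)$ is possible by the quaternion spectral theorem (which also guarantees real eigenvalues); this reduces the degenerate block to the non-degenerate situation, and the real eigenvalues appear as the dual parts of the repeated $\mu_i$. Uniqueness of $\Sigma_t$ counting multiplicities then follows because the appreciable $\sigma_i$ are the classical singular values of $A_s$, the dual corrections for appreciable $\mu_i$ are the eigenvalues of a Hermitian form on the $\sigma_i$-singular subspace canonically determined by $A_d$, and the infinitesimal $\mu_j$ are the nonzero singular values of $C_{22}$ — all intrinsic invariants of $A$.
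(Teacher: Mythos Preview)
The paper does not give its own proof of Theorem~\ref{DQSVD}; the result is quoted from \cite{QL23}, so there is no in-paper argument to compare against.

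That said, your outline is sound and is the natural two-stage strategy. A few remarks confirming the delicate points: the degenerate $2\times 2$ system on a block $I$ with constant $\sigma$ becomes consistent precisely when $C'_{ij}+(C'_{ji})^*=0$ for $i,j\in I$, and this is exactly what is achieved by first using the gauge freedom $W$ to diagonalize the Hermitian part of $C_{11,II}$; after that the remaining off-diagonal part is skew-Hermitian and can be absorbed by choosing, say, $X_{ij}=0$ and $Y_{ij}=-C'_{ij}/\sigma$. On the diagonal, $Y_{ii}-X_{ii}$ ranges over all pure-imaginary quaternions, which is the three real degrees of freedom needed to make $C'_{ii}$ real. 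For uniqueness, your invariants are the right ones: the standard parts of the appreciable $\mu_i$ are the singular values of $A_s$; within a $\sigma$-block the dual parts are the (real) eigenvalues of the Hermitian matrix $\tfrac12\bigl((V_s)_I^*A_d(U_s)_I+(U_s)_I^*A_d^*(V_s)_I\bigr)$, determined up to unitary conjugation by $W$; and the infinitesimal $\mu_j$ are the nonzero singular values of $(I_m-A_sA_s^+)A_d(I_n-A_s^+A_s)$, independent of the chosen bases for the null spaces. One small omission: you should note that within each block of equal $\sigma_i$ the eigenvalues of the Hermitian part can be ordered decreasingly, so that the total order $\mu_1\ge\cdots\ge\mu_r$ (lexicographic on $(\sigma_i,\text{dual part})$) is respected.
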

	
	If we follow the derivation of \cite{QL23}, then we may get the dual complex and dual linear versions of the above theorem as follows.

	\begin{Thm}\label{DCSVD}
		Suppose that $A \in {\mathbb {DC}}^{m \times n}$. Then there exists a unitary  dual complex matrix $V \in {\mathbb {DC}}^{m \times m}$ and a unitary dual complex matrix $U \in {\mathbb {DC}}^{n \times n}$ such that
		\begin{equation}\label{equ:Sigma}
			\Sigma = V^* AU = \begin{bmatrix} \Sigma_t & O  \\ O & O \end{bmatrix},
		\end{equation}
		where $\Sigma_t\in {\mathbb{D}}^{t\times t}$ is a diagonal dual real matrix, with the form $$\Sigma_t ={\rm diag}\left(\mu_1, \cdots, \mu_r, \cdots, \mu_t \right),$$
		$r \le t \le \min \{ m , n \}$, $\mu_1 \ge \mu_2 \ge \cdots \ge \mu_r$ are positive appreciable dual numbers, and $\mu_{r+1} \ge \cdots \ge \mu_t$ are positive infinitesimal  dual numbers.    Counting possible multiplicities of the diagonal entries, the form $\Sigma_t$ is unique.
		
	\end{Thm}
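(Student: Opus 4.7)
My plan is to mimic the derivation of Theorem~\ref{DQSVD} in the dual complex setting, observing that every operation used in that derivation (classical SVD, scalar inversion of positive numbers, Hermitian conjugation, and skew-Hermitian parametrization of infinitesimal unitary perturbations) stays inside $\mathbb{C}$, and hence inside $\mathbb{DC}$, as soon as we start from $A \in \mathbb{DC}^{m\times n}$. Thus no new phenomenon appears and the quaternion argument of~\cite{QL23} restricts cleanly.

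Concretely, I would first apply the classical complex SVD to the standard part, writing $A_s = V_s\,\mathrm{diag}(\sigma_1,\dots,\sigma_r,0,\dots,0)\,U_s^*$ with $V_s,U_s$ unitary complex and $\sigma_1\ge\cdots\ge\sigma_r>0$, and then compute
\[
V_s^*AU_s \;=\; \begin{bmatrix} \Sigma_r & O \\ O & O \end{bmatrix} + \epsilon\begin{bmatrix} C_{11} & C_{12} \\ C_{21} & C_{22} \end{bmatrix},
\]
where $C := V_s^*A_dU_s$ is partitioned conformably with the rank-$r$ block structure. To further diagonalize, I would seek unitary dual complex refinements $V = V_s(I_m+\epsilon P)$ and $U = U_s(I_n+\epsilon Q)$; the first-order unitarity conditions force $P^*=-P$ and $Q^*=-Q$. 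The $(2,1)$ cross block transforms to $\epsilon(C_{21}-P_{21}\Sigma_r)$ and the $(1,2)$ cross block to $\epsilon(C_{12}+\Sigma_r Q_{12})$, and invertibility of $\Sigma_r$ lets me cancel them by setting $P_{21}=C_{21}\Sigma_r^{-1}$ and $Q_{12}=-\Sigma_r^{-1}C_{12}$; the remaining off-diagonal skew-Hermitian blocks $P_{12},Q_{21}$ are then determined. The $(2,2)$ block is untouched by $P,Q$ and equals $\epsilon C_{22}$; a second classical complex SVD of $C_{22}$ supplies the positive infinitesimal singular values $\mu_{r+1},\dots,\mu_t$ together with complex unitary factors that I fold into the outer $(m-r)$ and $(n-r)$ blocks of $V$ and $U$.

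The technical heart is the $(1,1)$ block $\Sigma_r + \epsilon[C_{11}+\Sigma_r Q_{11}-P_{11}\Sigma_r]$, which must be rendered a real diagonal dual matrix so that the resulting singular values are truly dual real. For $\sigma_i\ne\sigma_j$ the $2\times 2$ linear system in the entries of $P_{11},Q_{11}$ coming from the off-diagonal $(i,j),(j,i)$ pair is nonsingular, exactly as in Theorem~\ref{DQSVD}; the delicate case is repeated appreciable singular values, where I would exploit the rotational gauge freedom of $V_s,U_s$ within each singular subspace to diagonalize (classically) the Hermitian part of the corresponding diagonal sub-block of $C_{11}$, and then use the purely imaginary diagonal entries of $P_{11},Q_{11}$ to kill the imaginary parts of $\mathrm{diag}(C_{11})$. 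This is the step I expect to need the most care. Uniqueness of the form $\Sigma_t$ then follows directly from uniqueness (up to multiplicity) of the two classical SVDs invoked: the $\mu_i$ for $i\le r$ have appreciable part equal to the classical singular values of $A_s$, while $\mu_i/\epsilon$ for $r<i\le t$ are the nonzero classical singular values of the residual block $C_{22}$, both of which are intrinsic to $A$.
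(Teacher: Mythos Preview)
Your proposal is correct and coincides with the paper's approach: the paper gives no separate proof but simply instructs one to rerun the derivation of Theorem~\ref{DQSVD} from \cite{QL23} in the dual complex setting, and that is precisely what you outline, supplying the concrete block computations that the paper omits. The only point to tighten is uniqueness of the dual parts of the appreciable $\mu_i$: your sketch pins down their standard parts as the classical singular values of $A_s$, but you should also note that within each repeated-singular-value block the dual parts are the (intrinsic) eigenvalues of the Hermitian component of the corresponding sub-block of $C_{11}$, which makes the full multiset $\{\mu_1,\dots,\mu_r\}$ independent of the gauge choices.
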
	
	
	\begin{Thm}\label{DSVD}
		Suppose that $A \in {\mathbb {D}}^{m \times n}$. Then there exists an orthogonal  dual real matrix $V \in {\mathbb {D}}^{m \times m}$ and an orthogonal dual real matrix $U \in {\mathbb {D}}^{n \times n}$ such that
		\begin{equation}
			\Sigma = V^\top AU = \begin{bmatrix} \Sigma_t & O  \\ O & O \end{bmatrix},
		\end{equation}
		where $\Sigma_t\in {\mathbb{D}}^{t\times t}$ is a diagonal dual real matrix, with the form $$\Sigma_t ={\rm diag}\left(\mu_1, \cdots, \mu_r, \cdots, \mu_t \right),$$
		$r \le t \le \min \{ m , n \}$, $\mu_1 \ge \mu_2 \ge \cdots \ge \mu_r$ are positive appreciable dual numbers, and $\mu_{r+1} \ge \cdots \ge \mu_t$ are positive infinitesimal  dual numbers.    Counting possible multiplicities of the diagonal entries, the form $\Sigma_t$ is unique.
		
	\end{Thm}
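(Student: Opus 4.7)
My plan is to mirror the dual quaternion argument of \cite{QL23} in the real setting, exploiting the fact that every step of that construction uses only scalar-type operations (transpose, real SVD, solving $2\times 2$ linear systems) that make sense in $\mathbb{D}$. First I would apply the standard real SVD to $A_s$ to obtain real orthogonal matrices $V_0,U_0$ with $V_0^\top A_s U_0 = \mathrm{diag}(\sigma_1,\ldots,\sigma_r,0,\ldots,0)$, where $\sigma_1\ge\cdots\ge\sigma_r>0$. Setting $B:=V_0^\top A_d U_0$ and partitioning according to the rank $r$, one has $V_0^\top A U_0 = \Sigma_0 + B\epsilon$ with $\Sigma_0$ of block form $\mathrm{diag}(\Sigma_r,O)$ and $B = \bigl[\begin{smallmatrix} B_{11} & B_{12} \\ B_{21} & B_{22}\end{smallmatrix}\bigr]$.

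Next I would eliminate the off-diagonal infinitesimal blocks by post-multiplying with dual orthogonal matrices of the form $I+V_d\epsilon$ and $I+U_d\epsilon$, where $V_d$ and $U_d$ are skew-symmetric (this is precisely the orthogonality condition $V_d+V_d^\top=O$). A short computation shows that the new dual part becomes $B + V_d^\top \Sigma_0 + \Sigma_0 U_d$; since $\Sigma_r$ is invertible, choosing the $(1,2)$ and $(2,1)$ blocks of $U_d$ and $V_d$ to be $-\Sigma_r^{-1}B_{12}$ and $B_{21}\Sigma_r^{-1}$ respectively kills both off-diagonal dual blocks. For the $(1,1)$ block I want $B_{11}+V_{11}^\top\Sigma_r+\Sigma_r U_{11}$ to be diagonal; pairing off indices $(i,j)$ and $(j,i)$ gives a $2\times 2$ linear system with determinant $\sigma_i^2-\sigma_j^2$, which is solvable whenever $\sigma_i\ne\sigma_j$. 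Its diagonal entries then produce the dual parts of the appreciable singular values $\mu_1,\ldots,\mu_r$.

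For the remaining $B_{22}\epsilon$ block (of size $(m-r)\times(n-r)$), I would apply the standard real SVD to $B_{22}$ to produce real orthogonal $V_1,U_1$ with $V_1^\top B_{22}U_1 = \mathrm{diag}(\mu_{r+1},\ldots,\mu_t,0,\ldots,0)$; embedding these as blocks of $I\oplus V_1$ and $I\oplus U_1$ and multiplying through gives the infinitesimal singular values. Combining $V_0$, $I+V_d\epsilon$, $I\oplus V_1$ (and analogously for $U$) produces the required orthogonal dual real matrices $V$ and $U$, and a direct bookkeeping of the products confirms the claimed block-diagonal form of $\Sigma$. Uniqueness of the multiset of appreciable $\mu_i$'s follows at once from their being the (unique) nonzero singular values of $A_s$, while uniqueness of the infinitesimal $\mu_j$'s follows from the observation that the construction of $B_{22}$ is invariant (up to orthogonal conjugation) under the choice of $V_0,U_0$, so that its nonzero singular values are determined by $A$.

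The main obstacle will be the case of repeated standard singular values, where the $2\times 2$ system above becomes singular. The standard remedy is to use the non-uniqueness of $V_0,U_0$ within each eigenspace of repeated $\sigma_i$: I would precondition by an extra pair of real orthogonal block rotations that simultaneously diagonalize the symmetric part of the corresponding diagonal sub-block of $B_{11}$, which makes the residual equations solvable. A secondary technical point is bookkeeping the orthogonality of the composite dual matrices $V,U$ under multiplication, but this is routine once the skew-symmetry of $V_d,U_d$ is in place.
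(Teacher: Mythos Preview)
Your proposal is correct and matches the paper exactly: the paper gives no independent proof of Theorem~\ref{DSVD} but simply remarks that it is obtained ``by following the derivation steps of Theorem~\ref{DQSVD} in \cite{QL23}'', and your outline is precisely a real-version sketch of that construction. One small slip worth fixing when you write it up: uniqueness of the appreciable $\mu_i$ does not follow merely from the singular values of $A_s$ (that pins down only their standard parts); the clean argument is that the $\mu_i^2$ are the appreciable eigenvalues of the dual symmetric matrix $A^\top A$, whose spectrum is uniquely determined---this is part of the \cite{QL23} machinery you are invoking.
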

	
	Note that Theorems \ref{DCSVD} and \ref{DSVD} are not corollaries of Theorem \ref{DQSVD}.    We {cannot} obtain the conclusions on $U$ and $V$ are unitary dual complex matrices or orthogonal dual real matrices directly from Theorem \ref{DQSVD}.  But we can obtain Theorems \ref{DCSVD} and \ref{DSVD} by following the derivation steps of Theorem \ref{DQSVD} in \cite{QL23}.   Also, it is worth noting that in these three theorems, $\Sigma_t$ is always a diagonal dual real matrix.

	We call $\mu_1, \cdots, \mu_t$ and possibly $\mu_{t+1} = \cdots = \mu_{\min \{m, n \} }=0$, if $t < \min \{m, n\}$, the singular values of $A$, $t$ the rank of $A$, and $r$ the appreciable rank of $A$.
	In particular, we call $\mu_{r+1}, \dots, \mu_r$ the {\bf infinitesimal singular values} of $B$.  If
	$r=t$, then we say that $A$ is {\bf essential}.  In general, let
	\begin{equation}\label{equ:Ae}
		A_e = V\begin{bmatrix} \Sigma_r & O  \\ O & O \end{bmatrix}U^*.
	\end{equation}
	We call $A_e$ the {\bf essential part} of $A$, and
	{$$A_n = A-A_e= U_s\begin{bmatrix} O & O  \\  O &\Sigma_{2d} \end{bmatrix}V_s^* \epsilon,$$
		the {\bf nonessential part} of $A$,	where $\Sigma_{2d}={\rm diag}(\mu_{r+1,d},\cdots,\mu_{t,d},0,\cdots,0)$.}  By Theorems \ref{DQSVD}, \ref{DCSVD} and \ref{DSVD}, $A_e$ and $A_n$   are well-defined.   By \cite{QL23}, $A_e$ is the best rank-$r$ approximation of $A$.
	Furthermore, the nonessential part is always infinitesimal.
	Suppose $m = n = 1$.
	If the number $a$ is appreciable, then  its essential part is exactly $a$, and the nonessential part is zero; Otherwise,   the essential part is zero, and the nonessential part is exactly the dual part.

	\bigskip

	\section{A Genuine Extension of the Moore-Penrose Inverse}
	
	We now have the dual number version of the Moore-Penrose conditions (\ref{pen2}-\ref{pen4}) and (\ref{pene}).

	\begin{Def}
		Let $A=A_s+A_d\epsilon\in\mathbb {DC}^{m\times n}$ and $X=X_s+X_d\epsilon\in\mathbb {DC}^{n\times m}$. If  $X$  satisfies (\ref{pen2}-\ref{pen3}), i.e., (1e) $AXA = A_e$, (2) $XAX= X$, (3) $(AX)^*= AX$, (4) $(XA)^* = XA$.  We call $X$
		the {\bf genuine extension of the Moore-Penrose inverse} (GMPI) of $A$, denoted by $A^G$.
	\end{Def}
	
	{Consider the case that $m=n=1$ and $a=a_s+a_d\epsilon$. If  $a_s\neq0$, then $x=a^{-1}=a_s-a_s^{-2}a_d\epsilon$. If $a_s=0$ and $a_d\neq 0$, then $x=0$. If $a_s=a_d=0$, then $x=0$.
		
		If $A=\begin{bmatrix} \Sigma_t & O  \\ O & O \end{bmatrix}$ as in \eqref{equ:Sigma},
		where $\Sigma_t\in {\mathbb{D}}^{t\times t}$ is a diagonal dual real matrix, with the form $\Sigma_t ={\rm diag}\left(\mu_1, \cdots, \mu_r, \cdots, \mu_t \right),$
		$r \le t \le \min \{ m , n \}$, $\mu_1 \ge \mu_2 \ge \cdots \ge \mu_r$ are positive appreciable dual numbers, and $\mu_{r+1} \ge \cdots \ge \mu_t$ are positive infinitesimal  dual numbers.   Then we have
		$$A^G=A_e^D=\begin{bmatrix} \Sigma_r^{-1} & O  \\ O & O \end{bmatrix}=\begin{bmatrix} \Sigma_{r,s}^{-1} & O  \\ O & O \end{bmatrix}-\begin{bmatrix} \Sigma_{r,s}^{-2}\Sigma_{r,d} & O  \\ O & O \end{bmatrix}\epsilon,$$
		where $\Sigma_r ={\rm diag}(\mu_1, \cdots, \mu_r)$.
	}

	In the following theorem, we show that for any complex $A$, its GMPI exists uniquely.
	
	For a complex matrix $A_s$, let the SVD be $A_s=U_s\Sigma_sV_s^*$ with $\Sigma_s=\left[
	\begin{array}{cc}
		\Sigma_{1s} & O \\
		O  & O
	\end{array}
	\right]$ and $\Sigma_{1s}\in\mathbb R^{r\times r}$, then it is well known that its MP generalized inverse has the following form,
	\begin{equation*}
		A_s^+=V_s\left[
		\begin{array}{cc}
			\Sigma_{1s}^{-1} & O \\
			O  & O
		\end{array}
		\right]U_s^*.
	\end{equation*}
	Now we show that this result also holds true for GMPI of   dual complex  matrices.

	\begin{Thm}\label{thm:EssentialMPI}
		Let $A=A_s+A_d\epsilon\in\mathbb {DC}^{m\times n}$.  Suppose its SVD is $A = U\Sigma V^*$, where $U\in {\mathbb {DC}}^{m \times m}$ and $V \in {\mathbb {DC}}^{n \times n}$ are unitary dual matrices, and $\Sigma$ is a nonnegative diagonal dual matrix,
		{
			\begin{equation*}
				\Sigma =  \left[
				\begin{array}{cc}
					\Sigma_{1s}  &  \\
					& O
				\end{array}
				\right]+\left[
				\begin{array}{cc}
					\Sigma_{1d} &  \\
					& \Sigma_{2d}
				\end{array}
				\right]\epsilon, \Sigma_{1}=\Sigma_{1s}+\Sigma_{1d}\epsilon={\rm diag}\left(\mu_1, \cdots, \mu_r \right),
			\end{equation*}
			and $\Sigma_2=\Sigma_{2d}\epsilon={\rm diag}\left(\mu_{r+1,d}, \cdots, \mu_{t,d}, 0,\cdots, 0 \right)$,
		}
		$r \le t \le \min \{ m , n \}$, $\mu_1 \ge \mu_2 \ge \cdots \ge \mu_r$ are positive appreciable dual   numbers, and $\mu_{r+1} \ge \dots \ge \mu_t$ are positive infinitesimal  dual  real numbers.
		Let
		\begin{equation}\label{GMPI}
			A^G = V\Sigma^G U^*,
		\end{equation}
		and
		{$$\Sigma^G = \begin{bmatrix} \Sigma_1^{-1} & O  \\ O & O \end{bmatrix}= \begin{bmatrix} \Sigma_{1s}^{-1} & O  \\ O & O \end{bmatrix}-\begin{bmatrix} \Sigma_{1s}^{-2}\Sigma_{1d} & O  \\ O & O \end{bmatrix}\epsilon \in {\mathbb {DC}}^{m \times n}.$$}
		Here, we have $\mu^{-1}=\mu_{s}^{-1}-\mu_{s}^{-2}\mu_d\epsilon$.
		Then  $X = A^G$ is  the {\bf genuine extension of Moore-Penrose inverse} (GMPI) of $A$, and it is the unique solution of the Moore-Penrose conditions (1e) and (2-4), i.e., (\ref{pen2}-\ref{pen4}) and (\ref{pene}).
		
		Furthermore, if $A$ is complex, i.e., $A_d = O$, then $A^G = A_s^+$.
		
	\end{Thm}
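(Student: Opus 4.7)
The plan is to verify existence by direct block computation using the given SVD, and to establish uniqueness by reducing any candidate solution to the diagonal case through conjugation by the unitary dual matrices $U$ and $V$. The complex-matrix specialization then falls out as the degenerate case in which no infinitesimal singular values are present.

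For existence, starting from $A = U\Sigma V^*$ and the candidate $X = V\Sigma^G U^*$, I would compute $AX = U\Sigma\Sigma^G U^*$ and $XA = V\Sigma^G\Sigma V^*$. The crucial block identity is
\begin{equation*}
\Sigma\Sigma^G = \Sigma^G\Sigma = \begin{bmatrix} I_r & O \\ O & O \end{bmatrix},
\end{equation*}
because $\Sigma^G$ has a nonzero block only where $\Sigma_1$ sits and thus annihilates the infinitesimal tail $\Sigma_2 = \Sigma_{2d}\epsilon$. Since the projector $\mathrm{diag}(I_r, O)$ is real and Hermitian, conditions (3) and (4) follow at once. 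Condition (1e), $AXA = A_e$, then reduces to $U\,\mathrm{diag}(\Sigma_1, O)\,V^* = A_e$, which matches the definition \eqref{equ:Ae}. Condition (2), $XAX = X$, reduces to $V\,\mathrm{diag}(I_r, O)\,\Sigma^G U^* = V\Sigma^G U^*$, which holds because the projector fixes $\Sigma^G$.

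For uniqueness, given any $X$ satisfying the four conditions, I set $X' = V^*XU$ so that $X'$ satisfies the analogous four conditions with $A$ replaced by $\Sigma$. Partitioning $X'$ conformally to $\Sigma = \mathrm{diag}(\Sigma_1, \Sigma_2)$ as a $2\times 2$ block matrix with blocks $X_{11}, X_{12}, X_{21}, X_{22}$, condition (1e) yields $X_{11} = \Sigma_1^{-1}$, $X_{12}\Sigma_2 = O$, and $\Sigma_2 X_{21} = O$, while the lower-right equation $\Sigma_2 X_{22}\Sigma_2 = O$ is automatic from $\epsilon^2 = 0$. Hermiticity of $\Sigma X'$ via (3) then forces $\Sigma_1 X_{12} = (\Sigma_2 X_{21})^* = O$, hence $X_{12} = O$ since $\Sigma_1$ is invertible; symmetrically, (4) gives $X_{21} = O$. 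Finally, (2) applied to the $(2,2)$-block reads $X_{22}\Sigma_2 X_{22} = X_{22}$, and writing $X_{22} = X_{22,s} + X_{22,d}\epsilon$ and $\Sigma_2 = \Sigma_{2d}\epsilon$, the left-hand side collapses to $X_{22,s}\Sigma_{2d}X_{22,s}\epsilon$ by nilpotency, forcing $X_{22,s} = O$ from the standard part and then $X_{22,d} = O$ from the dual part. Hence $X' = \Sigma^G$ and $X = A^G$.

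The main obstacle is pinning down the $(2,2)$ block $X_{22}$: conditions (1e), (3), (4) all factor through multiplication by the infinitesimal $\Sigma_2$ and therefore leave substantial ambiguity in $X_{22}$. It is precisely condition (2), together with $\epsilon^2 = 0$, that removes this ambiguity, and this is where the present argument departs from the classical Moore-Penrose uniqueness proof. For the final assertion, if $A_d = O$ then all singular values are real, so $r = t$, $\Sigma_2 = O$, and $\Sigma^G = \Sigma_s^+$; choosing $U = U_s$, $V = V_s$ from the ordinary SVD of $A_s$ yields $A^G = V_s\Sigma_s^+ U_s^* = A_s^+$. Equivalently, by the uniqueness just established one only needs to verify that $A_s^+$ itself satisfies (1e)-(4) when $A = A_s$, which is immediate since $A_e = A = A_s$ and (1e)-(4) then reduce to the classical Penrose equations.
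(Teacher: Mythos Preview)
Your existence argument is essentially the paper's: both compute $AX$, $XA$, $AXA$, $XAX$ directly from the SVD and read off the four conditions from the block identity $\Sigma\Sigma^G=\Sigma^G\Sigma=\mathrm{diag}(I_r,O)$.

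Your uniqueness argument, however, is genuinely different from the paper's. The paper proves uniqueness by the classical Penrose chain of equalities, adapted to the dual setting: for any two solutions $X_1,X_2$ it shows $X_1=X_1AX_1=X_1A_eX_1=X_1AX_2AX_1=\cdots=X_1AX_2=\cdots=X_2$, where the passage between $A$ and $A_e$ relies on the identity $A_eX=AXAX=AX$ (and symmetrically $XA_e=XA$) coming from condition (2). This is coordinate-free and mirrors Penrose's original 1955 proof almost verbatim. You instead conjugate an arbitrary solution into the SVD frame and kill the off-diagonal and $(2,2)$ blocks one by one, using (1e), (3), (4) for the off-diagonals and then (2) together with $\epsilon^2=0$ for $X_{22}$. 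Your route is more computational but more transparent about \emph{which} condition does what; in particular, your observation that only condition (2) can pin down $X_{22}$---because (1e), (3), (4) all factor through the nilpotent $\Sigma_2$---is structural insight that the paper's slick chain conceals. The paper's route, on the other hand, needs no SVD at the uniqueness stage and would survive in any $*$-algebra setting where an $A_e$ with $AXA=A_e$ makes sense. Both arguments are correct; your final-assertion treatment (that $A_d=O$ forces $r=t$, hence $A^G=A_s^+$) matches the paper's one-line remark.
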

	\begin{proof}
		We first show $X=V\Sigma^GU^*$ satisfies the conditions (1e), (2)-(4). Indeed,
		\begin{eqnarray*}
			AXA &=&U\Sigma V^*  V\Sigma^GU^* U\Sigma V^*  = U\Sigma \Sigma^G \Sigma V^*  =U\Sigma_e V^* = A_e,\\
			XAX &=&  V\Sigma^GU^* U\Sigma V^*  V\Sigma^GU^* = V\Sigma^G\Sigma\Sigma^G U^* = V\Sigma^G U^*=X,\\
			(AX)^* &=& (U\Sigma V^* V\Sigma^GU^*)^* = (UI_{r,m}U^*)^*=UI_{r,m}U^*=AX,\\
			(XA)^* &=& ( V\Sigma^GU^*U\Sigma V^*)^* = (VI_{r,n}V^*)^*=VI_{r,n}V^*=XA,
		\end{eqnarray*}
		where {$\Sigma_e={\rm diag}(\Sigma_1,O_{m-r,n-r})\in\mathbb{DC}^{m\times n}$, $I_{r,n}=\text{diag}(I_r,O_{n-r})\in\mathbb R^{n\times n}$.}
		Therefore, for any $A\in\mathbb{DC}^{m\times n}$, $A^G$ defined by \eqref{GMPI} is the GMPI of $A$.

		Furthermore, since $$X_s=V_s\left[
		\begin{array}{cc}
			\Sigma_{1s}^{-1} &   \\
			& O
		\end{array}
		\right]  U_s^* \text{ and }
		A-A_e= U_s\left[
		\begin{array}{cc}
			O &   \\
			& \Sigma_{2d}
		\end{array}
		\right]  V_s^*\epsilon, $$
		there is
		\begin{eqnarray*}
			XA-XA_e = X_s (A-A_e)_d\epsilon= O.
		\end{eqnarray*}
		
		The uniqueness of $X$ is proved as follows.
		If $X_1$ and $X_2$ both satisfy conditions (1e), (2)-(4). Then we have
		\begin{eqnarray*}
			X_1&=&X_1AX_1 = X_1A_eX_1=X_1AX_2 AX_1\\
			&=& X_1(AX_2)^*(AX_1)^*=  X_1(AX_1AX_2)^*\\
			&=&X_1(A_eX_2)^*=X_1(AX_2)^*\\
			&=&   X_1AX_2\\
			&=& X_1A_eX_2=X_1AX_2 AX_2 = (X_1A)^*(X_2A)^*X_2\\
			&=&(X_2 AX_1A)^*X_2 = (X_2 A_e)^*X_2=(X_2 A)^*X_2\\
			&=& X_2 AX_2 = X_2.
		\end{eqnarray*}

		By the definition (\ref{GMPI}), if $A$ is a complex matrix, i.e., $A_d = O$, then $A^G = A_s^+$.
		This completes the proof.
	\end{proof}
	
	Let $m=n=1$.  Then $A$ and $X$ are dual numbers.  If $A = a_d\epsilon$ is a nonzero infinitesimal dual number, where $a_d$ is a real or complex number, then we always have $AXA = 0$, i.e., (\ref{pen1}) cannot hold.  Thus, we  generalize (\ref{pen1}) to \eqref{pene}. The reason is given in the following theorem.
	
	\begin{Thm}
		Let $A=A_s+A_d\epsilon\in\mathbb {DC}^{m\times n}$.  Suppose its SVD is $A = U\Sigma V^*$, where $U\in {\mathbb {DC}}^{m \times m}$ and $V \in {\mathbb {DC}}^{n \times n}$ are unitary dual {complex} matrices, and $\Sigma$ is a nonnegative diagonal dual matrix, 	
		\begin{equation*}
			\Sigma =  \left[
			\begin{array}{ccc}
				\Sigma_{1s}  & &  \\
				& O & \\
				& & O
			\end{array}
			\right]+\left[
			\begin{array}{ccc}
				\Sigma_{1d} &  & \\
				& \Sigma_{2d} & \\
				& & O
			\end{array}
			\right]\epsilon,
		\end{equation*}
		$\Sigma_{1}=\Sigma_{1s}+\Sigma_{1d}\epsilon={\rm diag}\left(\mu_1, \cdots, \mu_r \right)$, and $\Sigma_2=\Sigma_{2d}\epsilon={\rm diag}\left(\mu_{r+1,d}, \cdots, \mu_{t,d}\right)$,
		
		$r \le t \le \min \{ m , n \}$, $\mu_1 \ge \mu_2 \ge \cdots \ge \mu_r$ are positive appreciable {dual numbers}, and $\mu_{r+1} \ge \dots \ge \mu_t$ are positive infinitesimal  dual numbers. Then we have
		\begin{equation}
			A^G\in \mathop{\arg\min}\limits_{X\in \mathbb {DC}^{n\times m}} \quad \|AXA-A\|_F.
		\end{equation}
	{Furthermore, $A^G$ is the minimum-norm solution.
		The optimal value is   $$\|AA^GA-A\|_F=\|\Sigma_{2d}\|_F\epsilon.$$
		If all singular values are appreciable, then we have  $AA^GA=A$.}
	\end{Thm}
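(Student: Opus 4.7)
The plan is to use the SVD of $A$ to reduce both problems to diagonal minimizations. Writing $A = U\Sigma V^*$ and setting $Y = V^*XU$ so that $X = VYU^*$, the unitary invariance of the dual Frobenius norm established in Section~2 gives $\|AXA - A\|_F = \|\Sigma Y\Sigma - \Sigma\|_F$ and $\|X\|_F = \|Y\|_F$. It therefore suffices to minimize $\|\Sigma Y\Sigma - \Sigma\|_F$, and then $\|Y\|_F$, over $Y \in \mathbb{DC}^{n\times m}$.

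Partition $\Sigma = \mathrm{diag}(\Sigma_1,\Sigma_2,O)$, where $\Sigma_1 = \Sigma_{1s}+\Sigma_{1d}\epsilon$ is appreciable and invertible and $\Sigma_2 = \Sigma_{2d}\epsilon$ is purely infinitesimal with $\Sigma_{2d}$ a real invertible diagonal, and partition $Y = (Y_{ij})_{i,j=1}^{3}$ conformally. Block multiplication yields $(\Sigma Y\Sigma)_{ij} = \Sigma_i Y_{ij}\Sigma_j$, and the decisive identity is $(\Sigma Y\Sigma)_{22} = \Sigma_{2d}\epsilon\,Y_{22}\,\Sigma_{2d}\epsilon = O$ by $\epsilon^2=0$, so the $(2,2)$ block of $\Sigma Y\Sigma - \Sigma$ equals the fixed infinitesimal $-\Sigma_{2d}\epsilon$ for every $Y$. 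If $\Sigma_{1s}Y_{11,s}\Sigma_{1s} \neq \Sigma_{1s}$ then the standard part of $\Sigma Y\Sigma - \Sigma$ is nonzero, so $\|\Sigma Y\Sigma - \Sigma\|_F$ is appreciable and strictly larger than the infinitesimal target. Otherwise $Y_{11,s} = \Sigma_{1s}^{-1}$, and $\Sigma Y\Sigma - \Sigma$ is infinitesimal with nonzero blocks only at positions $(1,1),(1,2),(2,1),(2,2)$; since these sit in disjoint positions, the squared Frobenius norm of its dual part splits as a Pythagorean sum. The first three contributions $\Sigma_1 Y_{11}\Sigma_1-\Sigma_1$, $\Sigma_{1s}Y_{12,s}\Sigma_{2d}$, and $\Sigma_{2d}Y_{21,s}\Sigma_{1s}$ can each be driven to zero by the independent choices $Y_{11}=\Sigma_1^{-1}$, $Y_{12,s}=O$, $Y_{21,s}=O$, while the fourth contributes exactly $\|\Sigma_{2d}\|_F^2$. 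Hence the minimum of $\|AXA - A\|_F$ is $\|\Sigma_{2d}\|_F\epsilon$, attained at $Y = \Sigma^G$, i.e., $X = A^G$; when $r=t$ the middle block is void and the minimum is $O$, giving $AA^GA = A$.

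For the minimum-norm claim, read off the full family of minimizers: $Y_{11} = \Sigma_1^{-1}$ (which forces $Y_{11,d} = -\Sigma_{1s}^{-1}\Sigma_{1d}\Sigma_{1s}^{-1}$), $Y_{12,s} = Y_{21,s} = O$, with all remaining standard and dual blocks free. Minimize $\|Y\|_F$ in the dual ordering in two stages: first, $\|Y_s\|_F^2$ is the sum of squared Frobenius norms of its blocks, so it is minimized by setting every free standard block to $O$; second, once $Y_s$ has only the $(1,1)$ block nonzero, block multiplication shows $\mathrm{tr}(Y_s^*Y_d+Y_d^*Y_s) = 2\,\mathrm{Re}\,\mathrm{tr}(\Sigma_{1s}^{-1}Y_{11,d})$, which is already determined, so the remaining free dual blocks do not affect the dual part of $\|Y\|_F$. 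The canonical choice that sets all free blocks to $O$ is $Y = \Sigma^G$, which therefore realizes $\|X\|_F$ at its minimum and yields $X = A^G$. The main subtlety is bookkeeping the lexicographic dual ordering on $\|\cdot\|_F$ across its standard and dual parts together with the disjoint-block Pythagorean decomposition that isolates the irreducible residual $\|\Sigma_{2d}\|_F\epsilon$.
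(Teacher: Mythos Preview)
Your proof is correct and follows essentially the same route as the paper: reduce via the SVD substitution $Y=V^*XU$ and unitary invariance of $\|\cdot\|_F$ to the diagonal problem $\|\Sigma Y\Sigma-\Sigma\|_F$, observe that the $(2,2)$ block is the fixed residual $-\Sigma_{2d}\epsilon$ by $\epsilon^2=0$, force $Y_{11,s}=\Sigma_{1s}^{-1}$ to make the norm infinitesimal, and then kill the remaining controllable dual contributions. Your $3\times 3$ block partition and your handling of the minimum-norm step (first minimizing $\|Y_s\|_F$, then noting the free dual blocks do not enter $\mathrm{tr}(Y_s^*Y_d+Y_d^*Y_s)$ once $Y_s$ is supported on the $(1,1)$ block) are in fact a bit cleaner than the paper's presentation, but the argument is the same in substance.
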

	\begin{proof}
		Let $Y=V^* X U$, then we have
		\begin{eqnarray*}
			\|AXA-A\|_F &=& \|U\Sigma V^* X U\Sigma V^* - U\Sigma V^*\|_F\\
			&=& \|U\Sigma Y\Sigma V^* - U\Sigma V^*\|_F\\
			&=&   \| \Sigma Y \Sigma  -\Sigma\|_F\\
			&=&  \left\| \left[\begin{array}{cc}
				\Sigma_{1} Y_{11} \Sigma_{1}  -\Sigma_{1}   & \Sigma_{1} Y_{12} \Sigma_{2}    \\
				\Sigma_{2d} Y_{21} \Sigma_{1}  & \Sigma_{2d} Y_{22} \Sigma_{2}  -\Sigma_{2}
			\end{array} \right]\right\|_F\\
			&=&  \left\| \left[\begin{array}{cc}
				\Sigma_{1} Y_{11} \Sigma_{1}  -\Sigma_{1}   & \Sigma_{1} Y_{12} \Sigma_{2d} \epsilon    \\
				\Sigma_{2d} Y_{21} \Sigma_{1} \epsilon  & \Sigma_{2d} Y_{22} \Sigma_{2d} \epsilon^2  -\Sigma_{2d}  \epsilon
			\end{array} \right]\right\|_F.
		\end{eqnarray*}
		{Here, $Y_{11}\in\mathbb{DC}^{r\times r}$, $Y_{12}\in\mathbb{DC}^{r\times (t-r)}$,   $Y_{21}\in\mathbb{DC}^{(t-r)\times r}$, and $Y_{22}\in\mathbb{DC}^{(t-r)\times (t-r)}$.

			If $Y_{11s}= \Sigma_{1s}^{-1}$, then $\|AXA-A\|_F$ is an infinitesimal number.  By the total order in Qi, Ling and Yan \cite{QLY22}, an infinitesimal dual real number is smaller than any positive appreciable dual number. Hence,  we have
			$$ \Sigma_{1s}^{-1}=\mathop{\arg\min}\limits_{Y_{11s}\in \mathbb {C}^{r\times r}} \quad \| \Sigma Y \Sigma  -\Sigma\|_F$$
			and
			\begin{eqnarray*}
				\|AXA-A\|_F &=& \left\| \left[\begin{array}{cc}
					\Sigma_{1s} Y_{11d} \Sigma_{1s} +\Sigma_{1d}    & \Sigma_{1s} Y_{12d} \Sigma_{2d}    \\
					\Sigma_{2d} Y_{21d} \Sigma_{1s}    &   -\Sigma_{2d}
				\end{array} \right]\right\|_F \epsilon.
			\end{eqnarray*}
			Thus, there is
			\begin{eqnarray}
				-\Sigma_{1s}^{-2}\Sigma_{1d}, O,O= \mathop{\arg\min}\limits_{Y_{11d}, Y_{12d}, Y_{21d}} \quad \| \Sigma Y \Sigma  -\Sigma\|_F.
			\end{eqnarray}
			
			From the above discussion, we have the optimal value is $\|\Sigma_{2d}\|_F\epsilon$, and the optimal solution satisfies $Y_{11}=\Sigma_1^{-1}$, $Y_{12d}=O$, and $Y_{21d}=O$, i.e.,
			$$\left[
			\begin{array}{ccc}
				\Sigma_{1}^{-1}  & Y_{12s} & Y_{13} \\
				Y_{21s}& Y_{22} & Y_{23} \\
				Y_{31}& Y_{32} & Y_{33} \\
			\end{array}
			\right] \in\mathop{\arg\min}\limits_{Y\in \mathbb {DC}^{n\times m}} \quad \| \Sigma Y \Sigma  -\Sigma\|_F.$$
			
			Among the solutions, there is $$Y=\left[\begin{array}{cc}
				\Sigma_{1}^{-1}  &    \\
				& O  \\
			\end{array}\right],$$
			which is the minimum-norm solution.
			Equivalently, we have
			$$A^G=X=V\left[
			\begin{array}{cc}
				\Sigma_{1}^{-1}  &  \\
				& O
			\end{array}
			\right] U^*.$$
		}
		This completes the proof.
	\end{proof}

	Note that the discussion  in this section  is also valid for dual quaternion matrices.   We do not go into details here.

	\bigskip
	
	\section{GMPI, DMPGI and MPDGI}
	
	Below, we present a new sufficient and necessary condition for  the existence of DMPGI {and a new formulation of DMPGI}.
	
	\begin{Thm}\label{thm:existence_SVD}
		Let $A=A_s+A_d\epsilon\in\mathbb {DC}^{m\times n}$.  Then DMPGI of $A$ exists if and only if all singular values of $A$ are  appreciable, i.e., $A$ is substantial.
		{
			Furthermore, suppose $A=U\Sigma V^*$ is the SVD of $A$, $\Sigma={\rm diag}(\Sigma_r, O_{m-r,n-r})$, $\Sigma_r={\rm diag}(\mu_1,\cdots,\mu_r)$,  and $\mu_1,\cdots,\mu_r$ are appreciable.  Then there is
			\begin{equation}\label{equ:AD}
				A^D=V\Sigma^D U^*,\ \Sigma^D=\left[
				\begin{array}{cc}
					\Sigma_r^{-1} & O \\
					O  & O
				\end{array}
				\right].
			\end{equation}
		}
	\end{Thm}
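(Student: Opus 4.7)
The plan is to handle each direction of the equivalence separately, using the dual SVD (Theorem \ref{DCSVD}) as the main tool, and to read off the formula \eqref{equ:AD} along the way.

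For the ``if'' direction, suppose all nonzero singular values of $A$ are appreciable, i.e., the appreciable rank $r$ equals the rank $t$. Then the essential part $A_e$ coincides with $A$, so the extended condition (1e) collapses to the classical condition \eqref{pen1}. By Theorem \ref{thm:EssentialMPI}, the matrix $A^G = V \Sigma^G U^*$ already satisfies all four Moore-Penrose conditions \eqref{pen1}--\eqref{pen4}, so it is a DMPGI of $A$. Writing out $\Sigma^G$ under the assumption $r = t$ collapses to $\Sigma^D = \mathrm{diag}(\Sigma_r^{-1}, O)$, yielding \eqref{equ:AD}.

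For the ``only if'' direction, I would argue by contrapositive and suppose $r < t$, so that $\Sigma_{2d} \neq O$. Assuming some $X$ satisfies $AXA=A$ (i.e., a DMPGI exists), I would apply the change of variables $Y = V^* X U$. Since $U$ and $V$ are unitary dual matrices, the equation $AXA = A$ becomes $\Sigma Y \Sigma = \Sigma$. Splitting this into standard and dual parts and partitioning $\Sigma$ into three row/column blocks of sizes $r$, $t - r$, and the remainder, the standard part $\Sigma_s$ is nonzero only in its $(1,1)$ block. Examining the $(2,2)$ block of the dual part of $\Sigma Y \Sigma = \Sigma$ shows that every summand on the left vanishes in that block, while the right-hand side has $(2,2)$ block equal to $\Sigma_{2d} \neq O$, a contradiction.

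The main obstacle is the careful block accounting in the forward direction: one must track the three-level block structure (appreciable / infinitesimal-nonzero / zero) of $\Sigma$ and verify that the $(2,2)$ block of every left-hand term vanishes simply because of the sparsity of $\Sigma_s$. An equivalent route would quote Wang's characterization (Theorem \ref{thm:A+exists}(b)) and directly evaluate $(I_m - A_s A_s^+) A_d (I_n - A_s^+ A_s)$ in the SVD basis; the same block bookkeeping is needed, but one saves the rederivation by reducing to a known criterion. Uniqueness of the DMPGI then identifies $A^D$ with $A^G$, so that $A^D = V \Sigma^D U^*$ as claimed.
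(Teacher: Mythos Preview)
Your proposal is correct, but your primary argument differs from the paper's. The paper proves the equivalence in one stroke by invoking Wang's criterion (Theorem~\ref{thm:A+exists}(b)): it expands $A_d$ via the dual SVD as $U_s\Sigma_s V_d^* + U_s\Sigma_d V_s^* + U_d\Sigma_s V_s^*$ and computes $(I_m-A_sA_s^+)A_d(I_n-A_s^+A_s) = U_s\,\mathrm{diag}(O,\Sigma_{2d})\,V_s^\top$ directly, so the criterion holds iff $\Sigma_{2d}=O$; the formula \eqref{equ:AD} is then verified by checking the four Moore--Penrose conditions from scratch. Your route instead splits the two directions: for ``if'' you piggyback on Theorem~\ref{thm:EssentialMPI} (since $A_e=A$ when $r=t$, the GMPI already satisfies \eqref{pen1}), which is slicker than the paper's re-verification; for ``only if'' your direct contradiction via $\Sigma Y\Sigma=\Sigma$ and the vanishing of the $(2,2)$ block avoids any appeal to Wang's theorem and is self-contained. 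The paper's approach, by contrast, yields the identity $(I_m-A_sA_s^+)A_d(I_n-A_s^+A_s)=U_s\,\mathrm{diag}(O,\Sigma_{2d})\,V_s^*$ as a byproduct, which it uses immediately afterward (equation~\eqref{equ:An}) to identify the nonessential part $A_n$ explicitly. Your alternative route (quoting Theorem~\ref{thm:A+exists}(b) and evaluating in the SVD basis) is exactly what the paper does.
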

	\begin{proof}
		By Wang \cite{Wa21}, the DMPGI of $A$ exists if and only if $(I_m-A_sA_s^+)A_d(I_n-A_s^+A_s)=O.$ Namely,
		\begin{eqnarray*}
			O &=& (I_m-A_sA_s^+)A_d(I_n-A_s^+A_s) \\
			&=&  U_s\left[
			\begin{array}{cc}
				O & O \\
				O  & I
			\end{array}
			\right]U_s^\top \left(U_s\Sigma_sV_d^*+U_s\Sigma_dV_s^*+U_d\Sigma_sV_s^*\right)  V_s\left[
			\begin{array}{cc}
				O & O \\
				O  & I
			\end{array}
			\right]V_s^\top \\
			&=& U_s\left[
			\begin{array}{cc}
				O & O \\
				O  & I
			\end{array}
			\right](\Sigma_sV_d^* V_s + \Sigma_d + U_s^* U_d\Sigma_s )\left[
			\begin{array}{cc}
				O & O \\
				O  & I
			\end{array}
			\right]V_s^\top \\
			&=&  U_s\left[
			\begin{array}{cc}
				O & O \\
				O  & \Sigma_{2d}
			\end{array}
			\right]V_s^\top.
		\end{eqnarray*}
		Here, $\Sigma_d=\text{diag}(\Sigma_{1d},\Sigma_{2d})$, $\Sigma_{1d}\in\mathbb R^{r\times r}$, and $\Sigma_{2d}\in\mathbb R^{(m-r)\times (n-r)}$.
		Hence, there is $\Sigma_{2d}=O$, or equivalently, all singular values of $A$ are  appreciable.
		
		{Furthermore, let $X=V\Sigma^D U^*$, then there is
			\begin{eqnarray*}
				AXA &=&U\Sigma V^*  V\Sigma^DU^* U\Sigma V^*  = U\Sigma \Sigma^D \Sigma V^*  =U\Sigma  V^* = A,\\
				XAX &=&  V\Sigma^DU^* U\Sigma V^*  V\Sigma^DU^* = V\Sigma^D\Sigma\Sigma^D U^* = V\Sigma^D U^*=X,\\
				(AX)^* &=& (U\Sigma V^* V\Sigma^DU^*)^* = (UI_{r,m}U^*)^*=UI_{r,m}U^*=AX,\\
				(XA)^* &=& ( V\Sigma^DU^*U\Sigma V^*)^* = (VI_{r,n}V^*)^*=VI_{r,n}V^*=XA,
			\end{eqnarray*}
			where {$\Sigma_e={\rm diag}(\Sigma_1,O_{m-r,n-r})\in\mathbb{DC}^{m\times n}$, $I_{r,n}=\text{diag}(I_r,O_{n-r})\in\mathbb R^{n\times n}$.}
			Hence, $V\Sigma^D U^*$ is the DMPGI of $A$.
		}
		
		This completes the proof.
	\end{proof}
	
	{By Theorem~\ref{thm:existence_SVD}, there is
		\begin{equation}\label{equ:An}
			A_n= U_s\begin{bmatrix} O & O  \\  O &\Sigma_{2d} \end{bmatrix}V_s^* \epsilon=(I_m - A_sA_s^+)A_d(I_n-A_s^+A_s) \epsilon,
		\end{equation}
		where $\Sigma_{2d}={\rm diag}(\mu_{r+1,d},\cdots,\mu_{t,d},0,\cdots,0)$.}

	The relationship between GMPI and DMPGI is given as follows.

	\begin{Thm}\label{thm:GMPI}
		Let $A=A_s+A_d\epsilon\in\mathbb D^{m\times n}$. Then there is
		\begin{equation}\label{equ:Ae=Ar+}
			A^G= A_e^D.
		\end{equation}
		In other words, if   $A$ is substantial, then  DMPGI exists and is the same as GMPI. Otherwise, DMPGI does not exist  and GMPI is the  DMPGI of the substantial approximation $A_e$.
	\end{Thm}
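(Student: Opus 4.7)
The plan is to compare the SVD-based formulas for $A^G$ and $A_e^D$ and observe they coincide. Starting from the SVD $A = U\Sigma V^*$ of Theorem~\ref{thm:EssentialMPI}, with $\Sigma$ partitioned into its appreciable block $\Sigma_1 = \mathrm{diag}(\mu_1,\dots,\mu_r)$ and its infinitesimal block $\Sigma_{2d}\epsilon = \mathrm{diag}(\mu_{r+1,d},\dots,\mu_{t,d})\epsilon$, formula~(\ref{equ:Ae}) reads
$$A_e = U \begin{bmatrix} \Sigma_1 & O \\ O & O \end{bmatrix} V^*.$$
This is itself a valid SVD of $A_e$ (using the same unitary factors $U,V$), and by construction all its nonzero singular values $\mu_1,\dots,\mu_r$ are appreciable. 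Hence $A_e$ is substantial, so by Theorem~\ref{thm:existence_SVD} its DMPGI exists and is given by
$$A_e^D = V \begin{bmatrix} \Sigma_1^{-1} & O \\ O & O \end{bmatrix} U^*,$$
which is precisely the expression for $A^G$ in~(\ref{GMPI}). This establishes the identity $A^G = A_e^D$ in~(\ref{equ:Ae=Ar+}).

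The ``in other words'' clause follows at once: if $A$ is substantial then $A_e = A$, so $A^D$ exists and equals $A^G$; if $A$ is not substantial, then Theorem~\ref{thm:existence_SVD} forbids $A^D$ from existing, while $A_e^D$ still makes sense and coincides with $A^G$.

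An alternative approach that avoids writing out the blocks is to verify directly that $X := A_e^D$ satisfies conditions (1e), (2)--(4) for $A$, then invoke the uniqueness of GMPI from Theorem~\ref{thm:EssentialMPI}. The key observation is that $A_n A_e^D = O$ and $A_e^D A_n = O$, because the nonzero block of $A_n$ is supported on indices $r{+}1,\dots,t$ while $A_e^D$ is supported on indices $1,\dots,r$. This immediately yields $A A_e^D = A_e A_e^D$ and $A_e^D A = A_e^D A_e$, so conditions (1e), (2), (3), (4) for $A$ reduce to the corresponding Moore--Penrose identities for $A_e^D$ as the DMPGI of $A_e$, which hold by Theorem~\ref{thm:existence_SVD}.

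There is no serious obstacle in either approach: the first is pure bookkeeping on the SVD, the only subtlety being that $A$ and $A_e$ can share the same unitary factors, which is immediate from the block-diagonal form of $\Sigma$. The second approach trades this for a short calculation showing the vanishing of the cross-terms $A_n A_e^D$ and $A_e^D A_n$. I would favor the SVD approach because it mirrors the style of proofs of Theorems~\ref{thm:EssentialMPI} and~\ref{thm:existence_SVD} earlier in the paper and makes the structural reason for the identity most transparent.
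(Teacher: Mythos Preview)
Your proposal is correct and follows essentially the same approach as the paper: the paper's proof is a terse two-sentence version of your first argument, simply pointing to formulas~(\ref{GMPI}) and~(\ref{equ:AD}) and noting that when all singular values are appreciable one has $A_e=A$. Your write-up makes explicit the one step the paper leaves implicit---that the SVD of $A$ furnishes an SVD of $A_e$ with the same unitary factors---and your alternative verification via $A_nA_e^D=O$, $A_e^DA_n=O$ is a nice bonus but not needed.
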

	\begin{proof}
		Equation \eqref{equ:Ae=Ar+} follows directly from \eqref{GMPI} {and \eqref{equ:AD}}.
		If all singular values of $A$ are appreciable, then $A_e=A$ and  the GMPI of $A$ is equal to  the DMPGI of $A$.
	\end{proof}

	Furthermore,   the relationship between MPDGI and DMPGI is given as follows.

	{\begin{Thm}\label{Lem:Ap=Atp}
			Denote $A_1= (A_sA_s^+) A (A_s^+A_s)$. Then we have
			$$A^P=A_1^P=A_1^D=A_1^G.$$
		\end{Thm}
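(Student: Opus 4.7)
The plan is to reduce all three equalities to a single explicit computation of $A_1$ in terms of $A_s$ and $A_s^+$, and then apply the structural theorems already proved in the paper. First I would expand
\[
A_1 = (A_sA_s^+)(A_s+A_d\epsilon)(A_s^+A_s) = A_s + A_sA_s^+A_dA_s^+A_s\,\epsilon,
\]
using $A_sA_s^+A_s=A_s$. So the standard part is $A_{1,s}=A_s$ (hence $A_{1,s}^+ = A_s^+$) and the dual part is $A_{1,d}=A_sA_s^+A_dA_s^+A_s$.

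Next I would verify the equality $A^P = A_1^P$ by direct substitution into the defining formula \eqref{MPDGI}:
\[
A_1^P = A_{1,s}^+ - A_{1,s}^+ A_{1,d} A_{1,s}^+\epsilon
      = A_s^+ - A_s^+(A_sA_s^+A_dA_s^+A_s)A_s^+\,\epsilon
      = A_s^+ - A_s^+A_dA_s^+\,\epsilon = A^P,
\]
again invoking $A_s^+ A_s A_s^+ = A_s^+$.

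To get $A_1^D = A_1^P$, I would check that $A_1$ satisfies Wang's condition (b) of Theorem \ref{thm:AP=A+}:
\[
(I_m - A_{1,s}A_{1,s}^+)A_{1,d} = (I_m-A_sA_s^+)A_sA_s^+A_dA_s^+A_s = (A_sA_s^+ - A_sA_s^+)A_dA_s^+A_s = O,
\]
and symmetrically $A_{1,d}(I_n - A_{1,s}^+A_{1,s})=O$. Hence Theorem \ref{thm:AP=A+} gives $A_1^D$ exists and equals $A_1^P$.

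Finally, to obtain $A_1^G = A_1^D$, I would appeal to Theorem \ref{thm:GMPI}: since the DMPGI of $A_1$ exists, $A_1$ is substantial (all its singular values are appreciable), so $(A_1)_e = A_1$ and therefore $A_1^G = A_1^D$. Chaining the three equalities proved in these steps gives $A^P = A_1^P = A_1^D = A_1^G$. The main obstacle is purely bookkeeping: being careful that the identities $A_sA_s^+A_s=A_s$ and $A_s^+A_sA_s^+=A_s^+$ are applied in the right order when expanding $A_{1,d}$; once $A_{1,s}=A_s$ and $A_{1,d}=A_sA_s^+A_dA_s^+A_s$ are established, each of the four defining/qualifying identities collapses cleanly. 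No new ideas beyond Theorems \ref{thm:AP=A+} and \ref{thm:GMPI} are required.
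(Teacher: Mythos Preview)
Your proof is correct and follows essentially the same route as the paper: compute $(A_1)_s=A_s$ and $(A_1)_d=A_sA_s^+A_dA_s^+A_s$, verify $A_1^P=A^P$ from the defining formula~\eqref{MPDGI}, invoke Theorem~\ref{thm:AP=A+} for $A_1^P=A_1^D$, and use substantiality together with Theorem~\ref{thm:GMPI} for $A_1^D=A_1^G$. The only cosmetic difference is that the paper deduces substantiality of $A_1$ directly from $(I_m-A_sA_s^+)(A_1)_d(I_n-A_s^+A_s)=O$, whereas you deduce it from the existence of $A_1^D$ via Theorem~\ref{thm:existence_SVD}; both are immediate.
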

		\begin{proof}
			By definition, we have
			\begin{eqnarray*}
				(A_1)_s&=&(A_sA_s^+) A_s (A_s^+A_s)=A_s.
			\end{eqnarray*}
			Hence
			\begin{eqnarray*}
				A_1^P&=&A_s^+-A_s^+(A_1)_dA_s^+\epsilon\\
				&=& A_s^+-A_s^+ (A_sA_s^+) A_d (A_s^+A_s)A_s^+\epsilon\\
				&=& A_s^+-A_s^+A_dA_s^+\epsilon\\
				&=& A^P.
			\end{eqnarray*}
			
			By  $(I_m-A_sA_s^+)(A_1)_d=O$ and $(A_1)_d(I_n-A_s^+A_s)=O$ and Theorem~\ref{thm:AP=A+}, we have $A_1^P=A_1^D$.
			
			Since $(I_m-A_sA_s^+)(A_1)_d(I_n-A_s^+A_s)=O$, $A_1$ is substantial. By  Theorem~\ref{thm:GMPI}, we have $A_1^D=A_1^G$.
			
			This completes the proof.
			
		\end{proof}
	}

	Note that the DMPGI of $\epsilon$ does not exist, but its GMPI is $0$, the same as its MPDGI. On the other hand, we have the following example.

	\begin{example}\label{exm 8.1}-- A dual real matrix $A$, whose  DMPGI does not exist, and MPDGI and GMPI are   different.
		
		Let $A = A_s+A_d\epsilon$, where
		\begin{equation}
			A_s = \left(\begin{array}{cc}
				1 & 0 \\
				0 & 0
			\end{array} \right)
			\text{ and }
			A_d = \left(\begin{array}{cc}
				0 & 1 \\
				1 & 1
			\end{array} \right).
		\end{equation}
		{The SVD of  $A$ is
			\begin{equation*}
				A=U\Sigma V^\top,   U=\left[\begin{array}{cc}
					1 & -\epsilon \\
					\epsilon & 1
				\end{array} \right], \Sigma=\left[\begin{array}{cc}
					1   &  \\
					& \epsilon
				\end{array} \right], V=\left[\begin{array}{cc}
					1 & -\epsilon \\
					\epsilon & 1
				\end{array} \right].
			\end{equation*}
			Further, there are
			\[\Sigma_e=\left[\begin{array}{cc}
				1   &  \\
				& 0
			\end{array} \right], \ \Sigma^G=\left[\begin{array}{cc}
				1   &  \\
				& 0
			\end{array} \right].\]
		}
		
		The MPDGI is $A^P=A_s=A_s^+$. In other words, the dual part is completely ignored in MPDGI.
		On the other hand, there are
		\begin{equation*}
			A_e= {U\Sigma_e V^\top=}\left(\begin{array}{cc}
				1 & \epsilon \\
				\epsilon & 0
			\end{array} \right) \text{ and } A^G={U\Sigma^GV^\top=} \left(\begin{array}{cc}
				1 & \epsilon \\
				\epsilon & 0
			\end{array} \right).
		\end{equation*}
	\end{example}
	
	Both MPDGI and GMPI exist for all dual real matrices. However, they may be different, as  shown in Example~\ref{exm 8.1}. We now study their relationship in the following theorem.
	\begin{Thm}
		Let $A=A_s+A_d\epsilon\in\mathbb D^{m\times n}$. Then GMPI and MPDGI   are   equal if and only if
		\begin{equation}\label{equ:AP=Ar+}
			(A_sA_s^+)A_d(I_n-A_s^+A_s)=O \text{ and }  (I_m-A_sA_s^+)A_d(A_s^+A_s)=O.
		\end{equation}
		{
			Otherwise, if \eqref{equ:AP=Ar+} is not true, there is
			\begin{equation}
				A^P=A_{2}^G,
			\end{equation}
			where $A_{2}=A-(A_sA_s^+)A_d(I_n-A_s^+A_s)\epsilon-(I_m-A_sA_s^+)A_d(A_s^+A_s)\epsilon$.
		}
		\begin{proof}
			Consider the following two cases.
			
			Case (a),  $A$ is substantial. Then GMPI is equal to DMPGI.
			Then $(I_m-A_s^+A_s^+)A_d(I_n-A_s^+A_s)=O$. Combining it with \eqref{equ:AP=Ar+}, we have $(I_m-A_sA_s^+)A_d=O$ and $A_d(I_n-A_s^+A_s)=O$.
			Then by Wang \cite{Wa21},  MPDGI is equal to DMPGI. Hence,  we have  GMPI and MPDGI   are equal to each other.
			
			Case (b),  $A$ is not substantial. Then we have $A^G=A_e^D$. By \eqref{equ:AP=Ar+}, there is
			\begin{eqnarray*}
				A_e &=& A - (I_m-A_sA_s^+)A(I_n-A_s^+A_s)\\
				&=&  (A_sA_s^+)A(A_s^+A_s).
			\end{eqnarray*}
			Hence, it follows from Theorem~\ref{Lem:Ap=Atp} that
			\begin{equation*}
				A^P=A_e^D=A^G.
			\end{equation*}

			Furthermore, we have equation \eqref{equ:AP=Ar+} holds for $A_2$ and
			\begin{equation*}
				A^P=A_2^P=A_2^G,
			\end{equation*}
			where the first equality follows directly from the definitions of MPDGI and $A_2$, and the second equality follows from the first part of this theorem.
			
			This completes the proof.
		\end{proof}
	\end{Thm}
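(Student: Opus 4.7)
The plan is to introduce the four orthogonal projector products $P_1=A_sA_s^+$, $P_2=I_m-P_1$, $Q_1=A_s^+A_s$, $Q_2=I_n-Q_1$, and work with the decomposition
\[
A_d=P_1A_dQ_1+P_1A_dQ_2+P_2A_dQ_1+P_2A_dQ_2.
\]
In these terms, condition \eqref{equ:AP=Ar+} says that the two cross-pieces $P_1A_dQ_2$ and $P_2A_dQ_1$ vanish, while by Theorem~\ref{thm:A+exists} substantiality of $A$ is exactly the vanishing of $P_2A_dQ_2$. The identities $A_s^+P_2=O$ and $Q_2A_s^+=O$ (together with $P_1A_s=A_sQ_1=A_s$) will drive every calculation.

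For the \emph{if} direction I split on whether $A$ is substantial. In the substantial case, Theorem~\ref{thm:GMPI} gives $A^G=A^D$; combining $P_2A_dQ_2=O$ with \eqref{equ:AP=Ar+} kills three of the four pieces and leaves $P_2A_d=O$ and $A_dQ_2=O$, so Theorem~\ref{thm:AP=A+} yields $A^D=A^P$, hence $A^G=A^P$. In the non-substantial case, $A^G=A_e^D$; using \eqref{equ:An} for $A_n$ together with \eqref{equ:AP=Ar+}, the essential part telescopes to $A_e=A_s+P_1A_dQ_1\epsilon=P_1AQ_1=A_1$, and Theorem~\ref{Lem:Ap=Atp} then gives $A^G=A_1^D=A^P$.

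Next I establish the unconditional ``otherwise'' identity $A^P=A_2^G$, which I will also use to close the converse. First, $A_2$ satisfies \eqref{equ:AP=Ar+}: its dual part is $P_1A_dQ_1+P_2A_dQ_2$, whose cross-pieces are both zero. Second, $A^P=A_2^P$: the standard parts coincide, and because $A_s^+P_2=O$ and $Q_2A_s^+=O$, each of the two subtracted terms contributes nothing to $A_s^+(A_2)_dA_s^+$. Applying the already-proved \emph{if} direction to $A_2$ then gives $A_2^P=A_2^G$, so $A^P=A_2^G$.

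The \emph{only if} direction then follows cleanly: $A^G=A^P=A_2^G$ forces $A_e^D=(A_2)_e^D$, and a quick check shows $(A_2)_e=A_1$. Since $(\cdot)^D$ is injective on substantial matrices (because $(B^D)^D=B$ by the symmetry of the Moore-Penrose conditions under swapping $A$ and $X$), we conclude $A_e=A_1$; a direct computation of $A_e-A_1=(P_1A_dQ_2+P_2A_dQ_1)\epsilon$ then extracts \eqref{equ:AP=Ar+}. The main obstacle is the projector bookkeeping---in particular, verifying $(A_2)_e=A_1$ and the telescoping $A_e=A_1$ under \eqref{equ:AP=Ar+}, together with cleanly justifying the injectivity step; none of these is deep, but each must be done carefully to avoid circularity between the two directions.
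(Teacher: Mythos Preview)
Your argument for the \emph{if} direction and for the unconditional identity $A^P=A_2^G$ tracks the paper's proof essentially step for step: the same substantial/non-substantial case split, the same invocations of Theorems~\ref{thm:AP=A+}, \ref{thm:GMPI} and \ref{Lem:Ap=Atp}, and the same use of \eqref{equ:An} to rewrite $A_e$ as $P_1AQ_1=A_1$ once the cross-terms vanish. The projector bookkeeping you set up ($A_s^+P_2=O$, $Q_2A_s^+=O$, etc.) is exactly what the paper is using implicitly.

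Where you diverge from the paper is that you actually prove the \emph{only if} direction, which the paper's proof does not address at all: the paper states the result as an equivalence but only argues from \eqref{equ:AP=Ar+} to $A^G=A^P$. Your converse is correct and nicely organized---you exploit the already-established identity $A^P=A_2^G$ to get $A_e^D=(A_2)_e^D$, invoke the involution $(B^D)^D=B$ on substantial matrices to conclude $A_e=(A_2)_e$, verify $(A_2)_e=A_1$, and then read off $P_1A_dQ_2=P_2A_dQ_1=O$ from $A_e-A_1=(P_1A_dQ_2+P_2A_dQ_1)\epsilon$ (the two pieces lie in complementary ranges, so their sum vanishes only if each does). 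This closes a genuine gap in the paper's argument.
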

	It should be noted that    condition  \eqref{equ:AP=Ar+} is loose than the condition for the equivalence of MPDGI and DMPGI in Theorem~\ref{thm:AP=A+}, i.e.,
	$$(I_m-A_s^+A_s^+)A_dA_s^+A_s=O, \text{ and } A_s^+A_s^+A_d(I_n-A_s^+A_s)=O,$$
	since \eqref{equ:AP=Ar+} does not require $(I_m-A_s^+A_s^+)A_d(I_n-A_s^+A_s)=O$.
	We summarize the relationships between MPDGI, DMPGI, and GMPI   as follows.
	
	{\bf Remark.} Let $A=A_s+A_d\epsilon\in\mathbb D^{m\times n}$ and
	\begin{eqnarray}
		\nonumber M_2 &=& (A_sA_s^+)A_d(I_n-A_s^+A_s),\\
		M_3 &=& (I_m-A_sA_s^+)A_d(A_s^+A_s), \label{equ:M1234}\\
		\nonumber M_4 &=& (I_m-A_sA_s^+)A_d(I_n-A_s^+A_s).
	\end{eqnarray}
	Then we have the following results.
	\begin{itemize}
		\item[(i)]   If $M_4=O$, then DMPGI exists and GMPI is equal to DMPGI. Otherwise, $A^G=(A-M_4\epsilon)^D$.
		\item[(ii)] GMPI is equal to MPDGI  if and only if $M_2=0$ and $M_3=O$. Otherwise, $A^P=(A-M_2\epsilon-M_3\epsilon)^G$.
		\item[(iii)] GMPI is equal to MPDGI and DMPGI if and only if $M_2=O$, $M_3=O$, and $M_4=O$. Otherwise, $A^P=A_1^P=A_1^D=A_1^G$, where $A_1=(A-M_2\epsilon-M_3\epsilon-M_4\epsilon)$.
	\end{itemize}

	\begin{example}
		Let $A = A_s+A_d\epsilon$, where
		\begin{equation}
			A_s = \left[\begin{array}{cc}
				1 & 0 \\
				0 & 0
			\end{array} \right]
			\text{ and }
			A_d = \left[\begin{array}{cc}
				1 & 1 \\
				1 & 1
			\end{array} \right].
		\end{equation}
		For this example,
		\begin{equation*}
			M_2=  \left[\begin{array}{cc}
				0 & 1 \\
				0 & 0
			\end{array} \right],
			M_3=  \left[\begin{array}{cc}
				0 & 0 \\
				1 & 0
			\end{array} \right],
			M_4=  \left[\begin{array}{cc}
				0 & 0 \\
				0 & 1
			\end{array} \right].
		\end{equation*}
		The SVD of  $A$ is
		\begin{equation*}
			A=U\Sigma V^\top,   U=\left[\begin{array}{cc}
				1 & -\epsilon \\
				\epsilon & 1
			\end{array} \right], \Sigma=\left[\begin{array}{cc}
				1+\epsilon  &  \\
				& \epsilon
			\end{array} \right], V=\left[\begin{array}{cc}
				1 & -\epsilon \\
				\epsilon & 1
			\end{array} \right].
		\end{equation*}
		Further,  there are
		\[\Sigma_e=\left[\begin{array}{cc}
			1+\epsilon   &  \\
			& 0
		\end{array} \right], \ \Sigma^G=\left[\begin{array}{cc}
			1-\epsilon   &  \\
			& 0
		\end{array} \right].\]
		Hence, the DMPGI does not exist,  the GMPI satisfies
		\begin{equation*}
			A^G=(A-M_4\epsilon)^D = U\Sigma^G V^\top = \left[\begin{array}{cc}
				1-\epsilon  & \epsilon \\
				\epsilon & 0
			\end{array} \right],
		\end{equation*}
		and the MPDGI satisfies
		\begin{equation*}
			A^P= (A-M_2\epsilon-M_3\epsilon-M_4\epsilon)^P= \left[\begin{array}{cc}
				1-\epsilon  & 0 \\
				0 & 0
			\end{array} \right].
		\end{equation*}
		In other words, the GMPI may keep more information about the original matrix.
	\end{example}
	

	At last, we show  the relationships between MPDGI, DMPGI, and GMPI in Figure~\ref{fig:DGI_3cases}.
	\begin{figure}
		\centering
		\includegraphics[width=0.9\linewidth]{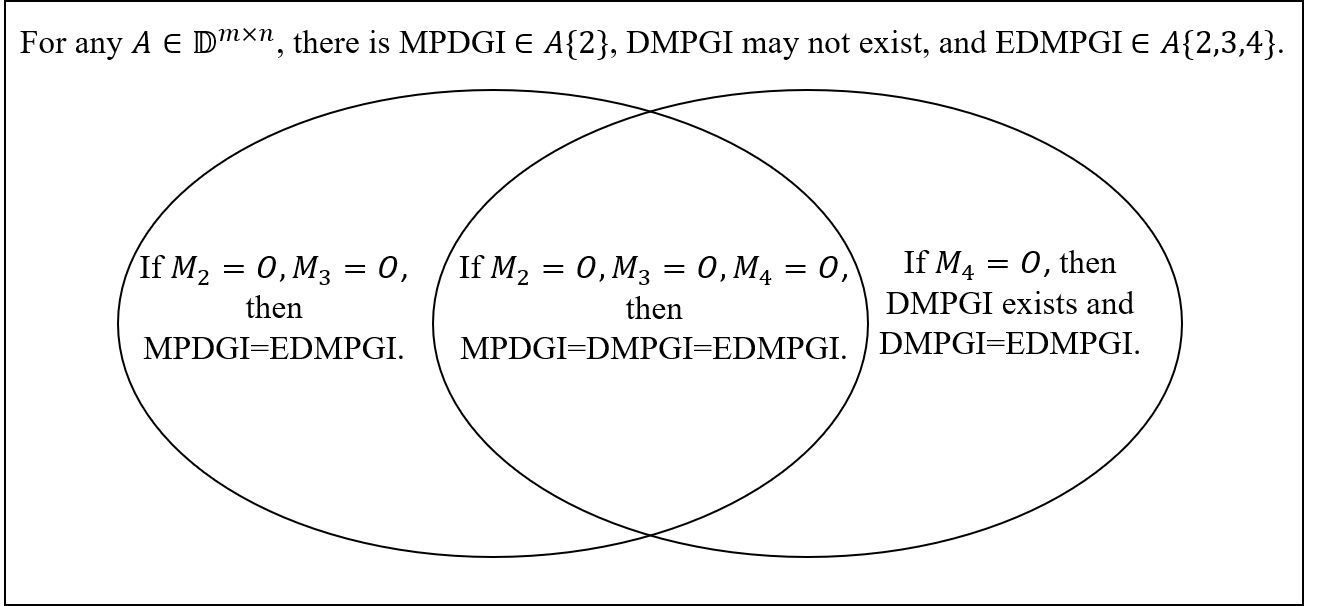}
		\caption{Relationships between MPDGI, DMPGI, and GMPI. Here, $M_2,\dots,M_4$ are defined by \eqref{equ:M1234}.}
		\label{fig:DGI_3cases}
	\end{figure}

	\bigskip
	
	{
		

	}

	

	\bigskip
	


\end{document}